\def\serieslogo@{}
\def\@setcopyright{}
\def\div{div\,}
\def\r{{{r}}} %the residual ``R'' was chnaged to ``r''
\def\bu{{\bf u}}
\def\bv{{\bf v}}
\def\bw{{\bf w}}
\def\lam{\lambda}
\def\u{{\mathbf u}}  %the minimizer
\def\z{{\mathbf z}}   %perturbed minimizer
\def\Leb{L}
\newcommand{\RR}{\mathbb R}
\newcommand{\TT}{\mathbb T}
\newcommand{\Om}{\Omega}
\newcommand{\Omd}{\Om} %{\TT^d} switched to \Om 
\renewcommand{\geq}{\geqslant}
\renewcommand{\leq}{\leqslant}
\def\rp{2} % general formulation - power=r
\def\Frechet{Fr\'{e}chet }
\newcommand{\D}{\textnormal{d}}
\newcommand{\dt}{{\D}t}
\newcommand{\dds}{{\D}\!s}
\newcommand{\ds}{\displaystyle}
\newcommand{\B}{\Leb^\infty} 
\newcommand{\Bnorm}[1]{ {\|#1\|_{\B}}}
\newcommand{\JT}{{\mathcal V}}
\def\weakLp{\Leb^{p,\infty}}
\newcommand{\wkLp}[1]{\|#1\|^p_{\weakLp}}
\def\weakLp{\Leb^{p,\infty}}
\newcommand\dx{{\D}\hspace*{-0.02cm}x}
\newcommand\dy{{\D}\hspace*{-0.005cm}y}
\newcommand\dmu{{\D}\mu}
\newcommand\Fvee{{\JT}}
\newcommand{\be}{\begin{equation}}
\newcommand{\ee}{\end{equation}}
\def\cD{{\mathcal D}}
\newcommand\eref[1]{{\rm (\ref{#1})}}
\newcommand{\iref}[1]{{\rm (\ref{#1})}}
\renewcommand{\geq}{\geqslant}
\renewcommand{\ge}{\geqslant}
\renewcommand{\leq}{\leqslant}
\renewcommand{\le}{\leqslant}
\theoremstyle{plain}
\newtheorem{THEOREM}{Theorem}[section]
\newtheorem{theorem}[THEOREM]{Theorem}
\newtheorem*{assumption}{Assumption}
\theoremstyle{definition}
\newtheorem{lemma}{Lemma} %[THEOREM]%{Lemma}
\theoremstyle{remark}
\newtheorem{remark}[THEOREM]{Remark}
\newtheorem{example}[THEOREM]{Example}
\numberwithin{equation}{section}
\begin{document}
%%%%%%%%%%%%%%%%

\title[Constructions of bounded solutions of $\div \bu=f$ in critical  spaces]{construction of bounded solutions of $\div \bu=f$ in critical  spaces}

\author[Albert Cohen]{Albert Cohen} 
\address[Albert Cohen] {
\newline Laboratoire Jacques-Louis Lions
\newline UPMC Univ Paris 06, UMR 7598 
\newline Paris, France F-75005}
\email[]{cohen@ann.jussieu.fr}

\author[Ronald DeVore]{Ronald DeVore} 
\address[Ronald DeVore] {
\newline Department of Mathematics
\newline Texas A\&M University
\newline College Station, TX 77840, USA}
\email[]{rdevore@math.tamu.edu}

\author[Eitan Tadmor]{Eitan Tadmor}
\address[Eitan Tadmor]{
\newline Department of Mathematics
\newline Institute for Physical sciences and Technology (IPST)
\newline University of Maryland
\newline MD 20742-4015, USA}
\email[]{tadmor@umd.edu}

\date{\today}

\subjclass{35C10, 35F05, 46A30, 49M27, 68U10}
\keywords{Nonlinear solutions, duality, hierarchical decompositions.}

\thanks{\textbf{Acknowledgment.} 
This research was supported by the NSF grant DMS-2134077 of the MoDL program as well as the MURI ONR grant N00014-20-1-2787, ONR grant N00014-21-1-2773 and the Fondation Sciences Math\'ematiques de Paris.  }

\setcounter{page}{1}

%%%%%%%%%%%%%%%%%%%%%%%%%%%%%%%%%%%%%%%
% Abstract
%%%%%%%%%%%%%%%%%%%%%%%%%%%%%%%%%%%%%%%
\begin{abstract} 
We construct uniformly bounded solutions of the equation $\div \bu=f$  for arbitrary data  $f$ in the critical  spaces $ L^d(\Omega)$, 
where $\Omega$ is a domain of $\RR^d$.
This question was addressed by Bourgain \& Brezis, \cite{BB2003}, who proved that although the problem has a uniformly bounded solution, it is critical in the sense that there exists no linear solution operator for general $L^d$-data. We first discuss the validity of this existence 
result under weaker conditions than $f\in L^d(\Omega)$, and then 
focus our work on constructive processes for such uniformly bounded solutions.
In the $d=2$ case,  we present a direct one-step explicit construction,  
which generalizes for $d>2$ to a $(d-1)$-step construction based on induction. 
An explicit construction is proposed for compactly supported 
data in $L^{2,\infty}(\Omega)$ in the $d=2$ case.
We also present constructive approaches based on optimization
of a certain loss functional adapted to the problem.  This approach provides a two-step construction in the $d=2$ case. This
optimization is used as the building block of a hierarchical multistep process 
introduced in \cite{Tad2014} that
converges to a solution in more general situations. 
\end{abstract}

%%%%%%%%%%%%%%%%%%%%%%%%%%%%%%%%%%%%%%%
% Title and Table of contents
%%%%%%%%%%%%%%%%%%%%%%%%%%%%%%%%%%%%%%%
\maketitle

\centerline{\it Dedicated to our friend and colleague Wolfgang Dahmen on his 75th birthday}

\setcounter{tocdepth}{2}
{\small \tableofcontents}

%%%%%%%%%%%%%%%%%%%%%%%%%%%%%%%%%%%%%%%
\section{Introduction}
%%%%%%%%%%%%%%%%%%%%%%%%%%%%%%%%%%%%%%%

Let $Y$ denote a Banach space of functions defined on a $d$-dimensional
domain $\Omega \subset \RR^d$, where $d\geq 2$.
%,  and let $\cY_{\#}(\Omd)$ denote the subspace of its elements with zero average, $\ds \overline{F}:=\fint_{\Omd} F(x)\D x =0$. 
We are concerned with the existence and  construction of uniformly bounded solutions $u$ to the equation, 
\begin{equation}
\div \bu=f,
\label{diveq}
\end{equation}
whenever $f\in Y$.  
Namely,   we ask whether there exists a  $\gamma>0$ such that for every
$f\in Y$ there exists a solution  $\bu=(u_1,\dots,u_d)\in L^\infty(\Omd)$  to \iref{diveq}
such that
\begin{equation}\label{eq:divU=F}
\|{\bf u}\|_{L^\infty} \leq \gamma \|f\|_{Y}.
\end{equation}
Here for a vector valued function $\bv=(v_1,\dots,v_d)$ such that each $v_i$ belongs to a function space $X$, we use the simpler notation $\bv\in X$ and $\|\bv\|_X$ instead of $X^d$. We say the space  $Y$ is {\it admissible}
if for all $f\in Y$, \eqref{diveq} admits a solution such that \eref{eq:divU=F} holds.

There exists of course infinitely many solutions to \iref{diveq} since
as soon as one exists, we can add to it a null divergence function, for example a constant. The most natural  candidate for a solution $\bu$ when given $f$ is 
to solve Laplace's  equation with data $f$ and then to take $\bu$ as the gradient
of the solution. More precisely, we introduce
$$
\psi(x)=\nabla \phi(x)= \frac {C_d}{|x|^{d}}x,\quad x\in\RR^d,
$$
where $\phi$ is the fundamental solution of the Laplacian on $\RR^d$, and define
the so-called Helmholtz solution as
$$
\bu(x)=\bu_{\text{Hel}}(x)=\int_\Omd f(x)\psi(x-y)\dy= \tilde f *\psi(x),
$$
where $\tilde f(x)=f(x)$ for $x\in \Omd$ and $\tilde f(x)=0$ when $x\notin \Omd$.
Note that $\bu_{\text{Hel}}$ depends linearly on $f$.
When $\Omega$ is a bounded domain, it is readily seen that $\bu_{\text{Hel}}$ is a uniformly bounded solution of \eqref{diveq} for $f\in L^p(\Omd)$ whenever  $p>d$, and therefore 
the spaces $Y=L^p$, $p>d$ are all admissible. 

The question of whether  $Y=L^d$ is admissible was addressed in the seminal work of Bourgain \& Brezis \cite{BB2003}.
Their work studies the particular case where $\Omega=\TT^d$ is the $d$-dimensional torus, which leads to
assume in addition that $\int_{\TT^d} f=0$.  They proved that the problem \eqref{diveq} is
 \emph{critical} in the sense that it admits bounded solutions, but there is no \emph{linear} 
 solution operator from $Y$ to $L^\infty$. In particular, one cannot invoke the Helmholtz solution.
 We say a space $Y$ is critical if it is admissible but there is no linear mapping taking $f\in Y$ into a solution $\bu\in L^\infty$ of \eref{eq:divU=F}.
 
 The main interest of the present paper is two-fold.   We first ask   which of the classical function spaces  $Y$ are admissible.  Secondly, we are interested in  explicit constructions of solutions to the Bourgain-Brezis problem.  In particular, can we explicitly construct nonlinear mappings solving \eref{eq:divU=F} when $Y$ is 
critical. In section \ref{sec:theory} we discuss theoretical aspects of the problem.  We recall certain known results of Meyer which show that for $\Omega=\RR^d$, 
the space $G$ of all $f$ that admit a solution  $\bu\in L^\infty$ to \eref{eq:divU=F} is the dual space
$W^{1,1}_{\rm hom}$ which is defined as the closure
of the smooth test functions for the total variation. 
Therefore, any admissible space $Y$ must be a subspace of $G$.   In particular, $Y$ is admissible
if and only if $W^{1,1}_{\rm hom}$ embeds into $Y^*$.    
In particular, we show that not only is $L^ d$  admissible but also the larger space weak-$L^d$, i.e. $L^{d,\infty}(\Omd)$,  is admissible, as well as even weaker Morrey spaces.

In section \ref{sec:bounded}, we present explicit constructions
of bounded solutions for certain critical admissible spaces $Y$.   We give a one step formula in the $d=2$ case with $L^2$-data,
and we treat the case $d>2$ with $L^d$-data
by a $(d-1)$-step construction based on induction.  
We end this section by a construction for $L^{2,\infty}$-data in the case $d=2$
assuming in addition compact support.
The reader may find these constructions  interesting for their own sake. 
In section \ref{sec:variat}, we propose variational-based approaches for the
constructions of bounded solutions. In the case  $d=2$, this approach 
delivers the solution in two steps. More generally, we use this optimization
as the building block of a hierarchical decomposition 
that was used in \cite{Tad2014} to construct solutions to the Bourgain-Brezis 
with $L^d$ data by a limiting process. We use this 
multi-step hierarchical approach to construct solutions 
for more general data.

 %%%%%%%%%%%%%%%%
\section{Theory}\label{sec:theory}
%%%%%%%%%%%%%%%%

%%%%%%%%%%%%%%%%%%%%%%%%%%%%%%%%%%%
\subsection{Existence of bounded solutions for $L^{d}$-data}
%%%%%%%%%%%%%%%%%%%%%%%%%%%%%%%%%%%

{\bf Let $\Omega\subset\RR^d$.} The space  
$$
G=G(\Omega):=\{f=\div(\bu) \; :\; \bu\in L^\infty(\Omega)\},
$$ 
of distributions $u$  whose divergence is uniformly bounded has been
studied in various contexts, in particular image processing and nonlinear PDE's.
 
As noted in \cite{Mey2002}, for the case $\Omega=\RR^d$,  
the space $G(\RR^d)$ is the dual of the homogeneous space $W^{1,1}_{\rm hom}(\RR^d)$. The latter is defined as is the completion of the space of test functions 
$\cD(\RR^d)$ for the total variation, which defines a norm on this space.  

Let us recall that the total variation of  $v\in BV(\Omega)$ is defined as
$$
|v|_{TV}:=\sup _\bw \int_\Omega v \,\div \bw,
$$
where the supremum is taken over all $\bw=(w_1,\dots,w_d)\in {\mathcal D}(\Omega)$
such that $\|\bw\|_{L^\infty}=\sup_{x\in \Omega} |\bw(x)|_2\leq 1$. An equivalent 
quantity is defined in terms of finite difference:
$$
|v|_{TV}\sim \sup_{h>0} h^{-1}\sup_{|y|\leq h} \|v-v(\cdot-y)\|_{L^1(\Omega_h)},
$$
where $\Omega_h:=\{x\in \Omega\; : \; {\rm dist}(x,\partial\Omega)>h\}$.
When $\nabla v\in L^1$, in particular when $v\in W^{1,1}_{\rm hom}$, one simply has
$$
|v|_{TV}=\|\nabla v\|_{L^1}.
$$
Let us stress that $W^{1,1}_{\rm hom}(\RR^d)$ is strictly smaller than $BV(\RR^d)$.

Therefore, every $f$ in a function space $Y$
of locally integrable functions defined on $\RR^d$ admits the representation $f= \div \bu$ 
with a uniformly bounded $\bu$ satisfying the bound \eqref{eq:divU=F}
if and only if for any test function $g\in {\mathcal D}(\RR^d)$ one has
\begin{equation}
\Big |\int_{\RR^d} f g \Big |\leq \gamma \|f\|_Y|g|_{TV}.
\label{eq:apriori}
\end{equation}
%The solution $\bu$ may in addition be picked in ${\mathcal C}_0(\RR^d)$,if and only if \iref{eq:apriori} holds also for any $g\in BV(\RR^d)$. 
Note that this is equivalent
to the condition that
\begin{equation}
\Big |\int_{E} f\Big | \leq \gamma \|f\|_Y{\rm per}(E),
\label{eq:aprioriper}
\end{equation}
for all open sets $E$ of finite perimeter. Indeed the above is obtained from \iref{eq:apriori}
by taking $g=\varphi_\epsilon*\chi_E$ where $\varphi_\epsilon$
is a mollifier and letting $\epsilon\to 0$.
But from the 
coarea formula 
%for $BV$ functions,
\begin{equation}
|g|_{TV}=\int_{-\infty}^{+\infty} {\rm per}(E_t)\dt, \quad E_t:=\{x \; :\; g(x)>t\},
\end{equation}
see \cite{EG1992}, it also implies \iref{eq:apriori} for any $g\in \cD(\RR^d)$. Note that 
here, the perimeter ${\rm per}(E)$ coincides with the Hausdorff measure
${\mathcal H}^{d-1}(\partial E)$ only
for sufficiently nice sets (for example with Jordan domains with rectifiable boundaries). 
More generally it should be be defined as $|\chi_E|_{TV}$ or equivalently 
as ${\mathcal H}^{d-1}(\partial E^*)$ where $\partial E^*$ is the so-called reduced 
boundary as introduced by de Giorgi. 

\begin{remark} The co-area formula also shows that 
\iref{eq:aprioriper} actually implies the validity of 
\iref{eq:apriori} for any $g\in BV(\RR^d)$. Therefore any $f\in G$ that is
in addition locally integrable is also an element of the dual of $BV$. We give
further in Remark \ref{remarkdual} an example of a distribution that belongs to $G$ but are not 
in the dual of $BV$.
\end{remark}

For $Y=L^d(\RR^d)$, 
the validity of \eqref{eq:apriori} is ensured by the Sobolev embedding of $BV(\RR^d)$ 
into $L^{d'}(\RR^d)$ where $\frac 1 d+\frac 1 {d'}=1$. Since for a general domain $\Omega$, we can 
trivially extend any $f\in L^d(\Omega)$ by $0$
to obtain a function of $L^d(\RR^d)$ with  the same norm, this 
implies that  $Y=L^d(\Omega)$ is admissible. 

In  \cite{BB2003}, the same result is given in the periodic context, 
where $\Omega=\TT^d$ is the $d$-dimensional torus. In this case, $Y=L^d(\Omega)$ is modified into
$$
Y=L^d_{\#}(\TT^d)=\left\{f\in L^d(\TT^d)\; : \; \int_\Omd f=0\right\}
$$
As shown in Proposition 2 therein, there exists no linear solution operator 
$f\in L^d_{\#}(\TT^d) \mapsto \bu\in L^\infty(\TT^d)$. Indeed,  
restricting attention to the simpler case of the two-dimensional torus, 
if $K: L^2_{\#}(\TT^2) \mapsto L^\infty(\TT^2)$ would be such a linear solution operator so  that $\div K={\mathbb I}$ is the identity, then so is 
$$
\ds \widetilde{K}:=\int \limits_{y\in \TT^2} \tau_{-y} K\tau_y \dy,
$$ 
which averages $K$ over all 2D translations $\tau_y$. Now $\widetilde{K}$ is translation invariant and it has a symbol,
$\Lambda(n):=(\lambda_1(n),\lambda_2(n))$ such that 
$\widetilde{K}(e^{in\cdot x})=\Lambda(n) e^{i n\cdot x}$. Since $\widetilde{K}$ is assumed to boundedly map $L^2$ to $L^\infty$, one should have $(\Lambda(n))_{n\in {\mathbb Z}^2}
\in \ell^2({\mathbb Z}^2)$. However, since $\div \widetilde{K}=\div K={\mathbb I}$, that is $n\cdot \Lambda(n)=1$, this implies $\ds |\Lambda(n)|_{{}_2} \geq \frac{1}{|n|_{{}_2}}$ which is
a contradiction to  $(\Lambda(n))_{n\in {\mathbb Z}^2}
\in \ell^2({\mathbb Z}^2)$.  

The lack of linearity is attributed to the general fact that the problem of solving  ${\mathcal L}\bu = f$ with 
$\bu\in X$ is critical if $\textnormal{Ker}({\mathcal L})$ has no complement in $X$ \cite{BB2007,Aji2009}.
This is the case of $\div$ in $L^\infty$. One of the main themes in \cite{BB2003} is the existence of solution with further $W^{1,d}$-regularity, similar to the Helmholtz solution $\bu_{\text{Hel}}$ that cannot be ensured
to be uniformly bounded since it depends linearly on $f$.

%%%%%%%%%%%%%%%%%%%%%%%%%%%%%%%%%%%
\subsection{Existence of solutions for $L^{d,\infty}$ data.}
%%%%%%%%%%%%%%%%%%%%%%%%%%%%%%%%%%%

One first observation is that the Bourgain-Brezis problem 
has also a positive answer for the larger Lorentz space $Y=L^{d,\infty}(\Omd)$. Recall that a measurable function $f$ is in $L^{d,\infty}(\Omd)$ if and only if
$|\{x\in\Omd: |f(x)|> t\}|\le C^dt^{-d}$, $t>0$, and the smallest $C$ for which this holds is its $L^{d,\infty}(\Omd)$ norm.

\begin{theorem} 
There exists a constant $\gamma=\gamma_d$ such that for any $f\in \Leb^{d,\infty}(\Om)$, there exists  $\bu\in L^\infty(\Omega)$ satisfying
\begin{equation}\label{eq:divweak}
\div \bu=f, \quad   \|\bu\|_{L^\infty} \leq \gamma \|f\|_{\Leb^{d,\infty}(\Om)}.
\end{equation}
\end{theorem}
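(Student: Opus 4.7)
The plan is to reduce the statement to the set-theoretic a priori bound \eqref{eq:aprioriper} with $Y=L^{d,\infty}$, exactly as was done in the preceding subsection for $L^d$-data. First, I would extend $f$ by zero outside $\Om$ to obtain $\tilde f\in L^{d,\infty}(\RR^d)$ with the same norm, and work on $\RR^d$. By the coarea formula already invoked above, the integral bound \eqref{eq:aprioriper} implies \eqref{eq:apriori} for every $g\in \cD(\RR^d)$, and hence Meyer's duality $G(\RR^d)=(W^{1,1}_{\rm hom}(\RR^d))^*$ produces $\bu\in L^\infty(\RR^d)$ with $\div \bu=\tilde f$ and $\|\bu\|_{L^\infty}\le\gamma\|\tilde f\|_{L^{d,\infty}}$. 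Restricting $\bu$ to $\Om$ yields the desired solution.

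The whole argument then boils down to verifying that for every set $E\subset\RR^d$ of finite perimeter,
$$
\Big|\int_E \tilde f\Big|\le \gamma \|\tilde f\|_{L^{d,\infty}(\RR^d)}\,{\rm per}(E).
$$
For this I would combine two standard ingredients. The first is H\"older's inequality in the Lorentz-space duality $(L^{d,\infty},L^{d',1})$, $d'=d/(d-1)$:
$$
\Big|\int_E \tilde f\Big|\le C\,\|\tilde f\|_{L^{d,\infty}}\,\|\chi_E\|_{L^{d',1}},
$$
together with the well-known identity $\|\chi_E\|_{L^{d',1}}=|E|^{1/d'}=|E|^{(d-1)/d}$, which is the crucial feature making $L^{d',1}$ (and not the rougher $L^{d'}$) the natural predual here. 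The second is the classical isoperimetric inequality
$$
|E|^{(d-1)/d}\le c_d\,{\rm per}(E),
$$
which upgrades a measure-bound into a perimeter-bound. Combining them gives \eqref{eq:aprioriper} with $\gamma=Cc_d$.

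In summary, the plan is: extension to $\RR^d$, reduction to the set-inequality \eqref{eq:aprioriper} via Meyer-duality plus coarea, then Lorentz-H\"older with the sharp computation of $\|\chi_E\|_{L^{d',1}}$, then isoperimetry. No step is substantially technical, but the conceptual point — and the reason the result sits precisely at the $L^{d,\infty}$ endpoint — is that $L^{d',1}$ is the unique Lorentz refinement of $L^{d'}$ in which the norm of an indicator is exactly the corresponding power of its Lebesgue measure, turning the isoperimetric inequality into a perfect match for weak-$L^d$ data. This is also what I expect to be the template when pushing the admissibility further into Morrey scales, where the analogue of $\|\chi_E\|_{L^{d',1}}=|E|^{1/d'}$ will need to be adapted.
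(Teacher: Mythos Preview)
Your proposal is correct and follows essentially the same route as the paper: reduce to the set inequality \eqref{eq:aprioriper}, bound $\int_E|f|$ by $C\|f\|_{L^{d,\infty}}|E|^{(d-1)/d}$, then apply the isoperimetric inequality. The only cosmetic difference is that the paper obtains the bound $\int_E|f|\le \tfrac{d}{d-1}\|f\|_{L^{d,\infty}}|E|^{(d-1)/d}$ by a direct layer-cake computation rather than by citing Lorentz--H\"older, and in Remark~\ref{remlor} it records the dual formulation via the embedding $BV\hookrightarrow L^{d',1}$, which is the same observation you are making.
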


\begin{proof}
By definition of $Y=L^{d,\infty}(\Omd)$, one has
for any $f\in Y$ and measurable $E$,
$$
\int_E |f|=\int_{t>0} |\{x\in E\; : \; |f(x)|>t\}| \dt
\leq \int_{t>0}\min\{|E|, \|f\|_{L^{d,\infty}}^d t^{-d}\} \dt
\leq \frac {d}{d-1}\|f\|_{L^{d,\infty}} |E|^{\frac {d-1}d}.
$$
Therefore \iref{eq:aprioriper} follows by application of the isoperimetric inequality
with $\gamma_d=\frac {d}{d-1}K_d$ where $K_d$ is the isoperimetry constant \cite[3.2.43]{Fed1969}. 
\end{proof}

\begin{remark}
\label{remlor}
An equivalent proof consists in establishing that $BV(\RR^d)$ has continuous embedding
in the dual Lorentz space $Y^*=L^{d',1}(\RR^d)$, that is
\begin{equation}\label{eq:GNS}
\|g\|_{\Leb^{d',1}(\RR^d)} \leq  \beta_d|g|_{TV}.
\end{equation}
This readily follows by using the expression of the 
$L^{d,1}(\RR^d)$
norm through the distribution function
$$
\|g\|_{\Leb^{d',1}(\RR^d)}  =d'\!\int \limits_0^\infty |\{x\in \Om\; : \; |g(x)|>t\}|^{\frac{d-1}{d}}\dt,
$$
see e.g. \cite{BC2011}, and invoking the co-area formula and isoperimetric inequality to bound this
quantity by the total variation of $g$.
\end{remark}

\begin{remark}
\label{remweak}
The property assering that
$$
\int_E |f| \leq C|E|^{\frac {d-1}d}
$$
holds
for all measurable sets $E$ is actually equivalent to 
the membership of $f$ in $L^{d,\infty}(\Omd)$.  The smallest $C$ for which this is valid gives an equivalent norm for $L^{d,\infty}$.  We shall use this norm in going forward in this paper.
\end{remark}

%%%%%%%%%%%%%%%%%
\subsection{Beyond $L^{d,\infty}$}
%%%%%%%%%%%%%%%%%%%

What is the largest Banach space $Y$ of Borel measures $\mu$ which can be expressed 
as divergences of uniformly bounded $\bu$? Placing ourselves in
$\Omega=\RR^d$, we know that
such a $Y$ should be embedded in 
$G(\RR^d)$ the dual of $W^{1,1}_{\rm hom}(\RR^d)$,  
that is, for all $\mu\in Y$ and $g\in W^{1,1}_{\rm hom}(\RR^d)$ one has
\begin{equation}
\int_{\RR^d} g \dmu \leq \gamma \|\dmu\|_Y|g|_{TV}.
\label{eq:apriorimeas}
\end{equation}
Using the co-area formula, this is ensured in particular if 
\begin{equation}
|\mu(E)|\leq \gamma\|\mu\|_Y {\rm per}(E)
\label{eq:aprioripermeas}
\end{equation}
for all sets $E\subset \RR^d$ of finite perimeter.

Let us introduce the linear space of measures $S^d(\RR^d)$ that satisfy the condition
\begin{equation}
|\mu|(B)\leq C R^{d-1},  \quad R>0,
\label{eq:guydavid}
\end{equation}
for all balls $B$ or radius $R$, equipped with the norm
\begin{equation}
\|\mu\|_{S^d} :=\sup  R^{1-d} |\mu|(B) ,
\label{eq:normguydavid}
\end{equation}
where the supremum is taken over all balls $B$. 
For a general domain $\Omega$, we define 
$S^d(\Omega)$ in a similar manner, replacing $B$ by $B\cap \Omega$, and observe that any measure
in this space has its extension by $0$ contained
in $S^d(\RR^d)$ with a smaller or equal norm.

In dimension $d=2$, for positive measures, 
the condition $\mu(B)\leq CR$ was introduced and studied by Guy David
for Dirac measures on a curve $\Gamma$. He proved that this condition
is equivalent to the Ahlfors regularity of $\Gamma$ and to the boundedness
of the Cauchy integral operator acting on $L^2(\Gamma,\mu)$.

A distinction should be made between $S^d(\Omega)$ and the Morrey space $M^d(\Omega)$
that consists of all locally integrable
$f$ such that, for all ball $B$ of radius $R$,
\begin{equation}
\int_{B\cap \Omega} |f| \leq C R^{d-1},
\label{eq:morrey}
\end{equation}
with norm defined in a similar manner. This space is
included in $S^d(\Omega)$ with equal norm when $\mu$ is of the form $f\, \dx$, but the inclusion is strict: consider for example $\mu$ to be the Dirac
measure on a segment of the plane in the  case $d=2$.
In view of Remark \ref{remweak}, we have
$$
L^{d,\infty} \subset M^d \subset S^d,
$$
and these inclusions are strict. The following result
that follows the arguments from \cite{Mey2002}
and \cite{PT2017}, shows that the Bourgain-Brezis problem 
has also a positive answer for 
$Y=M^d(\Omd)$ and $Y=S^d(\Omega)$.

\begin{theorem} 
There exists a constant $\gamma=\gamma_d$ such that the following holds.
For any $f\in M^d(\Omega)$, there exists $\bu\in L^\infty(\Omega)$ satisfying
\begin{equation}\label{eq:divmorrey}
\div \bu=f, \quad   \|\bu\|_{L^\infty} \leq \gamma \|f\|_{M^d}.
\end{equation}
For any $\mu\in S^d(\Omega)$, there exists  $\bu\in L^\infty(\Omega)$ satisfying
\begin{equation}\label{eq:divmeas}
\div \bu=\mu, \quad   \|\bu\|_{L^\infty} \leq \gamma \|\mu\|_{S^d}.
\end{equation}
\end{theorem}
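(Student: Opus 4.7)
My approach is to reduce both statements to the single a priori inequality
\begin{equation*}
|\mu(E)| \leq \gamma_d \|\mu\|_{S^d} \cdot \mathrm{per}(E), \qquad E \subset \RR^d \text{ of finite perimeter,}
\end{equation*}
after first extending $\mu$ (respectively $f\,\dx$) by zero outside $\Omega$. As noted just above the theorem, this extension preserves the $S^d$ norm, and since $M^d \subset S^d$ with matching norms on absolutely continuous measures, the Morrey case is subsumed by the measure case. Once the perimeter bound is established, the duality framework \iref{eq:apriorimeas}--\iref{eq:aprioripermeas} coupled with the coarea formula yields, for every test function $g \in \cD(\RR^d)$,
\begin{equation*}
\Big|\int_{\RR^d} g \,\dmu\Big| \leq 2 \int_0^\infty \big|\mu(\{|g|>t\})\big|\,\dt \leq 2\gamma_d \|\mu\|_{S^d}\, |g|_{TV},
\end{equation*}
after splitting into positive and negative parts if needed. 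By the Hahn--Banach identification of $G(\RR^d)$ with the dual of $W^{1,1}_{\rm hom}(\RR^d)$ recalled in Section \ref{sec:theory}, this delivers $\bu \in L^\infty$ with $\div \bu = \mu$ and the claimed norm bound; restricting $\bu$ to $\Omega$ finishes the proof.

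The key ingredient I would use to prove the perimeter estimate is the classical Gustin boxing inequality: every bounded set $E \subset \RR^d$ of finite perimeter admits a countable covering by closed balls $\{B(x_i, r_i)\}$ with
\begin{equation*}
\sum_i r_i^{d-1} \leq K_d \cdot \mathrm{per}(E).
\end{equation*}
Granting this, the defining control \iref{eq:guydavid} on each ball together with subadditivity give
\begin{equation*}
|\mu|(E) \leq \sum_i |\mu|(B(x_i, r_i)) \leq \|\mu\|_{S^d} \sum_i r_i^{d-1} \leq K_d \|\mu\|_{S^d} \,\mathrm{per}(E),
\end{equation*}
which is the required inequality with $\gamma_d = K_d$. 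Unbounded $E$ are handled by truncating to $E \cap B(0,R)$, applying the bounded case, and sending $R\to\infty$ along radii where $\mathcal{H}^{d-1}(\partial B(0,R) \cap \partial^* E) = 0$, which occurs for almost every $R$.

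The main substantive obstacle is the boxing inequality itself. Its proof covers the reduced boundary $\partial^* E$ by balls $B(x_i, r_i)$ centered on $\partial^* E$ with $r_i$ chosen so that $|E \cap B(x_i, r_i)|$ is comparable to $|B(x_i, r_i)|$, which is possible by the density dichotomy of de Giorgi at points of $\partial^* E$; one then applies the relative isoperimetric inequality in each ball to compare $r_i^{d-1}$ with the localized perimeter $\mathrm{per}(E; B(x_i, 5 r_i))$ and extracts via the Vitali $5r$-covering theorem a disjoint subfamily whose $5$-dilates still cover $E$ and whose localized perimeters sum to at most $\mathrm{per}(E)$. This is the step where all the geometric measure theory lives; once it is in hand, everything else is a routine assembly of tools already set up in Section \ref{sec:theory}.
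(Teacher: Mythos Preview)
Your proof is correct and follows essentially the same route as the paper: reduce to $\RR^d$ by zero extension, invoke the boxing inequality to obtain $|\mu|(E)\leq C\|\mu\|_{S^d}\,\mathrm{per}(E)$, and then pass to the duality estimate via the coarea formula after splitting $g=g_+-g_-$. The only cosmetic slip is that your displayed chain should carry $|\mu|(\{|g|>t\})$ rather than $|\mu(\{|g|>t\})|$, but since you in fact prove the stronger bound on $|\mu|(E)$ this is harmless.
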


\begin{proof}
Without loss of generality we work on $\Omega=\RR^d$.
As a first step, we use the boxing inequality \cite[Theorem 2.11]{PT2008} 
that states that any open set $E\subset \RR^d$ of finite perimeter can be
covered by balls $B_j$ of radius $R_j$ such that
\begin{equation}
\sum_j R_j^{d-1} \leq C{\rm per}(E),
\end{equation} 
where the constant $C$ depends only on $d$. This shows that for any $f\in M^d(\RR^d)$, we have
$$
\int_E |f| \dx\leq C \|f\|_{M^d} {\rm per}(E).
$$
which implies \iref{eq:aprioriper} and therefore proving 
\iref{eq:divmorrey}. 

Similarly, for any $\mu\in S^d(\RR^d)$, we have
$$
|\mu|(E)\leq C \|\mu\|_{S^d} {\rm per}(E).
$$
For any test function $g\in \cD(\RR^d)$, we write $g=g_+-g_-$ and
$$
\Big |\int_{\RR^d} g \D\mu \Big| \leq \int_{\RR^d} g_+\D|\mu|+\int_{\RR^d} g_-\D|\mu|.
$$
For the first term, we have
$$
\int_{\RR^d} g_+\D|\mu|=\int_{0}^\infty |\mu|(E_t) \dt \leq C \|\mu\|_{S^d} \int_{0}^\infty
{\rm per}(E_t)
$$
where $E_t:=\{x\; :\; g(x)> t\}$. With a similar treatment of the second term
and using the co-area formula, we reach 
$$
\Big |\int_{\RR^d} g \D\mu \Big| \leq C \|\mu\|_{S^d}|g|_{TV}
$$
which  shows that $\mu$ belongs to the space $G(\RR^d)$ 
and thereby proves \iref{eq:divmeas}.
\end{proof}

\begin{remark}
\label{remarkdual}
We stress that, in contrast to the functions of $M^d(\RR^d)$, the measures of $S^d(\RR^d)$ belong to $G(\RR^d)$ but not to the dual of $BV(\RR^d)$. This is due to the fact that the trace of a 
$BV$ function on a $d-1$ dimensional surface could be meaningless. For example
if $\mu$ is the Dirac measure on a segment of the $2d$ plane, we cannot apply it
to the $BV$ function $g=\chi_Q$ where $Q$ is a square that admits this segment
as one of its side.
\end{remark}

\begin{remark}
As pointed out in \cite{Mey2002} in the case $d=2$,
a positive measure belongs to $G(\RR^d)$ if and only if
it belongs to $S^d(\RR^d)$. Indeed, on the one hand the above result shows 
that $S^d(\RR^d)$ is contained in $G(\RR^d)$. On the other hand, if $\mu$ is a positive
measure that is contained in $G(\RR^d)$ then to any ball $B=B(x_0,R)$ we associate the
$W^{1,1}$ function
$$
g(x)=\max \left\{0,2-\frac {|x-x_0|}{R} \right\}
$$
Since $\mu$ is positive and belongs to $G(\RR^d)$, and since $g$ is positive and larger than $1$ on $B$, we find that
$$
\mu(B) \leq  \int g \D\mu \leq C|g|_{TV}.
$$
On the other hand, it is easily checked that $|g|_{TV}\leq C_d R^{d-1}$ 
where $C_d$ only depend of $d$, therefore proving that $\mu\in S^d(\RR^d)$.
\end{remark}

%%%%%%%%%%%%%%%%%%%%%%%%%%%%%%%
\section{Explicit constructions of bounded solutions}\label{sec:bounded}
%%%%%%%%%%%%%%%%%%%%%%%%%%%%%%

%%%%%%%%%%%%%%%%%
\subsection{A one-step explicit construction for $L^2$-data}
%%%%%%%%%%%%%%%%%

What follows is probably the simplest and most instructive construction of bounded
solutions to the Bourgain-Brezis problem \eqref{diveq}, at least in the  $d=$-case
with $Y=L^2(\Omd)$. Again, without loss of generality we will work on $\Omega=\RR^2$.

For any $(x,y)\in \RR^2$ and any fixed $f\in L^2(\RR^2)$, we define
\be
\label{HV}
V^2(x) :=\int_{-\infty}^{+\infty} |f(x,y)|^2  \, \dy,\quad     H^2(y):=\int_{-\infty}^{+\infty} |f(x,y)|^2\, \dx
\ee
and
\be
\alpha(x,y):=\frac{V(x)}{H(y)+V(x)},\quad  \beta(x,y):= \frac{H(y)}{H(y)+V(x)}.
\ee
We then consider the splitting $f=f_1+f_2$ where
\be
\label{fone}
f_1(x,y):=\alpha(x,y)f(x,y),\quad{\rm and} \quad f_2(x,y):=\beta(x,y)f(x,y),
\ee
and we define 
\be
\label{define1}
u_1(x,y):=\int_{-\infty}^x f_1(s,y)\dds \quad {\rm and}\quad u_2(x,y):=\int_{-\infty}^y   f_2(x,t)\dt
\ee
Therefore $\bu=(u_1,u_2)$ satisfies $\div \bu=f_1+f_2=f$ and it remains to check  
that $u$ is uniformly bounded.  Let us bound $|u_1(x,y)|$ for any arbitrary but fixed $(x,y)\in\RR^2$.  If $H(y)=0$, then obviously $u_1(x,y)=0$ for all $x$ so we assume $H(y)\neq 0$.  Then, for any $x$ we have
\begin{eqnarray*}
\label{normu}
|u_1(x,y)|&\le& \int_{-\infty}^\infty |f_1(s,y)|\, \dds\le \int_{-\infty}^\infty |f(s,y)| \frac{V(s)}{H(y)}\, \dds\\
&\le&  H(y)^{-1}\left(\int_{-\infty}^\infty|f(s,y)|^2\, \dds\right)^{1/2}\left(\int_{-\infty}^\infty V(s)^2\, \dds\right)^{1/2}\\
&=&  \left(\int_{-\infty}^\infty V(s)^2\, \dds\right)^{1/2}=\|f\|_{L^2(\RR^2)}.
\end{eqnarray*}
In a similar way, we obtain the bound $\|u_2\|_{L^\infty} \leq \|f\|_{L^2(\RR^2)}$, and so
$\bu$ is a bounded solution to the Bourgain-Brezis problem for $f$.

\begin{remark}
The above
splitting of $f$ into $f_1$ and $f_2$ is designed to ensure that the univariate primitive $u_1$
and $u_2$ 
are uniformly bounded. An interesting variant consists in taking 
\be
f_1(x,y):=f(x,y) \chi_{\{H(y)\leq V(x)\}} \quad {\rm and} \quad 
f_2(x,y):=f(x,y) \chi_{\{V(x)< H(y)\}}
\ee
for which it is easily checked that the solution $u$ also has each
component $u_1$ and $u_2$ uniformly bounded by $\|f\|_{L^2}$. The extra feature
of this choice is that $f_1$ and $f_2$ have disjoint supports. As we discuss next, in the more general $d$-dimensional case 
with $Y=L^d(\RR^d)$, it is 
possible to explicitly construct a splitting $f=f_1+\cdots+f_d$ also with disjoint supports and
such that the univariate primitive $u_j$ of $f_j$ with respect to $x_j$ are uniformly
bounded.
\end{remark}

\subsection{A $(d-1)$-step explicit construction for $L^{d}$-data}
We now consider the general $d$-dimensional case
with data $f\in L^d(\RR^d)$ . Given any such $f$, we construct 
 $d$ pairwise disjoint sets  
   $\Omega_j=\Omega_j(f)$ with $\RR^d=\bigcup_{j=1}^d\Omega_j$, so that  the functions
 $f_j:=f\chi_{\Omega_j}$ satisfy $f=f_1+\cdots +f_d$ as well as 
 \be
 \label{Td}
 \int_{\RR} |f_j(x_1,\dots, x_{j-1},s,x_{j+1},\dots, x_d)|\,\dds
 \le \|f\|_{L^d(\RR^d)},
 \quad j=1,\dots,d.
 \ee
 In turn the functions
\be
u_j(x_1,\dots,x_d):=\int_{-\infty}^{x_j} f_j(x_1,\dots, x_{j-1},s,x_{j+1},\dots, x_d)\,\dds,\quad j=1,\dots,d,
\ee
satisfy $\|u_j\|_{L^\infty(\RR^d)}\le \|f\|_{L^d(\RR^d)}$.  Hence, $\bu=(u_1,\dots,u_d)$ is a solution to \iref{diveq} with $\|\bu\|_{L^\infty(\RR^d)}\le \|f\|_{L^d(\RR^d)}$.  
 
 The construction proceeds  by induction on $d$.  When $d=1$,  given any $f\in L^1(\RR)$, we define $\Omega_1=\RR$ in which case the above claim is obvious.  We assume we have shown how to construct such sets $\Omega_1(g).\dots,\Omega_{d-1}(g)$ whenever $g\in L^{d-1}(\RR^{d-1})$ and give the construction of $\Omega_1(f),\dots,\Omega_d(f)$ whenever $f\in L^d(\RR^d)$. Without loss of generality, we can assume $\|f\|_{L^d(\RR^d)}=1$.   
   
 We write any vector $x\in\RR^d$ as $(x_1,y)$ where $y=(x_2,\dots,x_d)\in\RR^{d-1}$ and define the thresholds
$$
t(y)^{d-1}:= \int_\RR |f(x_1,y)|^d {\dx}_1,\quad y\in \RR^{d-1}.
$$
Let 
\be
\label{defO1}
\Omega_1:=\Omega_1(f):=\{x=(x_1,y)\in\RR^d: |f(x_1,y)|\ge \tau(y)\},
\ee
and $\Omega_1'$  be its complement in $\RR^d$. We define 
\be
\label{deffg}
f_1:=f\chi_{\Omega_1} \quad {\rm and }\quad g:=f-f_1=f\chi_{\Omega_1'}.
\ee
This determines $u_1$ and for any $x=(x_1,y)\in \RR^d$,  we have
$$
 \int_\RR |f_1(s,y)| \dds \leq
\int_\RR |f(s,y)| \left( \frac{|f(s,y)|}{t(y)} \right)^{d-1}\dds
= t(y)^{1-d} \int_\RR |f(s,y)|^d \dds=1.
$$ 
This shows that \iref{Td} holds for $f_1$ and $\|u_1\|_{L^\infty(\RR)}\le 1$ as desired.

We proceed to construct the set $\Omega_2,\dots,\Omega_d$.  For any fixed $x_1 \in\RR$, we consider the function $g(x_1,y)$ as a function of $y\in\RR^{d-1}$.  We have
\be
\label{checkg}
\int_{\RR^{d-1}}  |g(x_1,y)|^{d-1} \dy
\leq \int_{\RR^{d-1}}t(y)^{d-1} \dy=\|f\|_{L^d(\RR^d)}^{d} =1.
\ee
From the induction hypothesis, 
we can apply our construction to $g(x_1,\cdot)$
which is a function of $d-1$ variables $y=(y_2,\dots,y_d)$.
This gives $d-1$ disjoint sets $\Omega_j(x_1)$ for $j=2,\dots,d$ whose union is
$\RR^{d-1}$ and for which the above results are valid.
Therefore $g(x_1,\cdot)$.
is split into functions $g_j(x_1,\cdot)=g(x_1,\cdot)\chi_{\Omega_j(x_1)}$
which satisfy
\be
\int_{\RR } |g_j(x_1,y_2,\dots,y_{j-1}, s ,y_{j+1},\dots,y_{d}) |\,\dds \le 1,\quad j=2,\dots,d.
\ee
We now define
$$
\Omega_j=\Omega_j(f):= \{(x_1,y)):\ x_1\in\RR, \  y\in \Omega_j(x_1)\},\quad j= 2,\dots,d.
$$
This completes the definition of the sets $\Omega_j$ and the functions $f_j$ and $u_j$.  We are left to check \eref{Td} for $j\neq 1$. Since 
$f_j(x_1,y)=  g_j(x_1,y)$, it suffices to write 
 $$ 
 \int_{\RR} |f_j(x_1,\dots, x_{j-1},s,x_{j+1},\dots, x_d)|\,\dds=\int_{\RR } |g_j(x_1,y_2,\dots,y_{j-1}, s ,y_{j+1},\dots,y_{d-1}) |\,\dds \le 1,
$$
This establishes the properties we want of our construction for $d$ dimensions.

\subsection {A constructive decomposition for  $L^{2,\infty}$}  
We know from the theoretical results of \S 2, that the space 
$Y=L^{d,\infty}(\Omega)$ is admissible 
whenever $\Omega\subset\RR^d$  is measurable.  In this section, for any $\tau>1$
and any bounded measurable set $\Omega\subset \RR^2$, we  give an algorithm that takes any $f\in L^{2,\infty}(\Omega)$ and  constructs a solution $\bu$ to \iref{diveq} such that $\|\bu\|_{L^\infty} \le \tau \|f\|_{L^{2,\infty}}$.

 We fix $\Omega$ and $f$ in going forward.   We  assume without loss of generality that  
 $\Omega=[-R,R]^2$, for some $R>0$ and $\|f\|_{L^{2,\infty}(\Omega)} =1$. 
We define $f$ to be zero outside of $\Omega$.       
We  construct a disjoint splitting $\Omega=\Omega_1\cup \Omega_2$ and $f_j:=f\chi_{\Omega_j}$, $j=1,2$, and take
 \be
\label{defu}
u_1(x,y):=\int_{-\infty}^xf_1(s,y)\, \dds, \quad u_2(x,y):=\int_{-\infty}^yf_2(x,s)\, \dds.
\ee
Thus $\div(\bu)=f$ when $\bu:=(u_1,u_2)$, and the only issue will be to show that
  \be
\label{toshow}
 \int_{-\infty}^\infty |f_1(s,y)|\, \dds\le \tau , \quad \int_{-\infty}^\infty |f_2(x,s)|\, \dds\le \tau,
\ee
  for all $x,y\in\RR$.

   Let us first note that $f\in L^1(\RR^2)$ and 
   \be
   \label{L_1norm}
   M:=\|f\|_{L^1(\RR^2)} =\int_{\Omega} |f|\le |\Omega|^{1/2},
   \ee
   where we used Remark \ref{remweak}.
   We define the horizontal line $L_H(y):=\{(x,y): \ x\in [-R,R]\}$ at level $y$ and the vertical line $L_V(x):=\{(x,y): \ y\in[-R,R]\}$ at level $x$.  For  any measurable  function $g$ that is   
 supported on $\Omega$, we  define the energies
\be 
\label{energies}
E_H(g,y):=\int_{L_H(y)} |g(x,y)|\, \dx, \ y\in [-R,R], \quad  E_V(g,x):=\int_{L_V(x)} |g(x,y)|\, \dy, \ x\in [-R,R],
\ee
which may be infinite.

Here is the first step of our construction.  Let $A:=\{y\in [-R,R]:\  E_H(f,y)\le \tau\}$ and $A':=\{y\in [-R,R]:\  E_H(f,y)> \tau \}$
and 
 $$
 \Omega_H:=\{(x,y)\in\RR^2: E_H(f,y)\le \tau \}= \bigcup_{y\in A} L_H(y).
 $$
We define 
$f_1:=f $  on $\Omega_H$ and $f_2:= 0$ on $\Omega_H$.     Notice that  we have
\be
\label{defu1}
 \int_{-\infty}^x |f_1(s,y)|\, \dds\le \tau,\quad y\in A.
\ee
This means that \eref{toshow} is satisfied for $y\in A$. 

We proceed to the second step of our construction. 
Let $B:=\{x\in [-R,R]:\  E_V(f,x)\le\tau \}$ and $B':=\{x\in [-R,R]:\  E_V(f,x)> \tau \}$
and 
 $$
 \Omega_V:=\{(x,y)\in\Omega: E_V(f,x)\le \tau \}= \bigcup_{x\in B} L_V(x).
 $$
We define $f_2=f$ on $\Omega_V\setminus \Omega_H$ and $f_1$ is defined to be zero on this set.  We have
\be
\label{defu2}
 \int_{-\infty}^y |f_2(x,s)|\, \dds\le \tau,\quad x\in B.
\ee
 
Thus far, we have defined $f_1$ and $f_2$ on $\Omega_H \cup\Omega_V$.   Let  $\Omega':= \Omega\setminus(\Omega_H\cup\Omega_V)$.  The important thing to notice is
that $\Omega'$ is gotten from $\Omega$ by removing horizontal and vertical strips.  The following lemma shows that we have removed a significant portion of $\Omega$ in this construction.

\begin{lemma}
\label{L:C1}
The measure of $\Omega'$ satisfies
\be
\label{LC1}
|\Omega'|\le \tau^{-2}|\Omega|.
\ee
\end{lemma}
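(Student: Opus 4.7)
The plan is to first identify $\Omega'$ as a Cartesian product and then apply Markov's inequality twice, using the norm bound from Remark \ref{remweak} at the very end.

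First I would unpack the definitions. Since $L_H(y) = [-R,R]\times\{y\}$ and $L_V(x) = \{x\}\times[-R,R]$, we have $\Omega_H = [-R,R]\times A$ and $\Omega_V = B\times[-R,R]$, so
\begin{equation*}
\Omega' = \Omega \setminus (\Omega_H \cup \Omega_V) = B' \times A'.
\end{equation*}
Hence $|\Omega'| = |A'|\cdot |B'|$, and the task reduces to bounding each factor.

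Next, apply Markov's inequality. By the definition of $A'$, every $y \in A'$ satisfies $E_H(f,y) > \tau$, and hence
\begin{equation*}
\tau\, |A'| \;\leq\; \int_{A'} E_H(f,y)\,\dy \;=\; \int_{A'}\!\!\int_{-R}^{R} |f(x,y)|\,\dx\,\dy \;\leq\; \|f\|_{L^1(\RR^2)} = M.
\end{equation*}
An identical computation with the roles of $x$ and $y$ swapped gives $\tau\,|B'| \leq M$. Combining,
\begin{equation*}
|\Omega'| = |A'|\cdot|B'| \;\leq\; \frac{M^2}{\tau^2}.
\end{equation*}

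Finally, invoke the bound $M \leq |\Omega|^{1/2}$ already recorded in \eqref{L_1norm} (which itself follows from Remark \ref{remweak} applied to $E=\Omega$ together with $\|f\|_{L^{2,\infty}}=1$). This yields $M^2 \leq |\Omega|$ and therefore $|\Omega'| \leq \tau^{-2}|\Omega|$, as claimed.

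No real obstacle arises here: once one observes that the complement of a union of horizontal strips and vertical strips in a rectangle is itself a product set, the argument is just two applications of Markov and the \emph{a priori} $L^1$ bound coming from the weak-$L^2$ assumption. The only thing to be careful about is making sure that the strips being removed are aligned with the coordinate axes so that the complement factors cleanly; this is ensured by the very construction of $A$ and $B$.
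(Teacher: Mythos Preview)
Your proof is correct and follows essentially the same route as the paper: identify $\Omega'$ as the product set $B'\times A'$, bound each factor via $\tau\,|A'|\le M$ and $\tau\,|B'|\le M$ using the definition of $A',B'$, and finish with $M\le |\Omega|^{1/2}$. The only cosmetic difference is that you name Markov's inequality explicitly and write the product in the correct coordinate order $B'\times A'$ (the paper writes $A'\times B'$, a harmless slip since only the measure matters).
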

\noindent
{\bf Proof:}  To prove this claim, we observe that
  $$
  \Omega':=\{(x,y)\in\Omega: E_H(f,y)\ge \tau\  {\rm and} \ E_V(f,x)\ge \tau\}=A'\times B'.
  $$
Let $a:=|A'|$ and $b:=|B'|$ be the univariate Lebesgue measure of these sets. Then, we have
\be 
a\tau \le\ \int_{\Omega}|f(x,y)\,\dx\dy\le |\Omega|^{1/2}. 
\ee 
A similar argument gives $b\tau\le |\Omega|^{1/2}
$.  Hence, we have 
\be
\label{have1}
|\Omega'|=ab\le \tau^{-2}|\Omega|,
\ee
which proves the lemma.\hfill $\Box$

After applying the first step of our construction, we have defined $f_1$ and $f_2$ outside of $\Omega'$.   Let $\Omega^1:=\Omega'$ and let us repeat
our  construction for the set $\Omega^1$ in place of $\Omega$.  This gives a new set $\Omega^2:=[\Omega^1]'\subset\Omega^1$  and thereby give the definitions of $f_1,f_2$  outside of $\Omega^2$. 
The new residual set $\Omega^2$ satisfies $|\Omega^2|\le \tau^{-2}|\Omega^1|\le \tau^{-4}|\Omega|$.  Iterating this procedure gives in the limit a definition of $f_1$ and $f_2$ on all of $\Omega$ except for a set of measure zero.   One easily checks that \eref{toshow} holds.  For example to check this for $f_1$, we note that if $f_1$ is defined to be nonzero on
on a line $L_H(y)$ then at the (first) step $k$ where it is defined to be nonzero it is completely defined on this line and \eref{toshow} holds on this line.

We leave as an open problem the construction of decompositions with the above properties for the general case $f\in L^{d,\infty}(\RR^d)$ with $d\ge 2$.

%%%%%%%%%%%%%%%%%%%%%%%%%%%%%%
 \section{Variational-based constructions}\label{sec:variat}
 %%%%%%%%%%%%%%%%%%%%%%%%%%%%%%%%
 
 %%%%%%%%%%%%%%%%%%%%%%%%%%%%%%
 \subsection{Minimization problems}\label{sec:min}
 %%%%%%%%%%%%%%%%%%%%%%%%%%%%%%%%

 One natural way of approaching bounded solutions to \iref{diveq}
 for data $f\in Y(\Omega)$ is to consider the minimization
 of functionals of the form
 \be
 \Fvee(\bu)=\Fvee_{\lambda}(\bu)=\|\bu\|_{L^\infty}+\lambda \|f-\div \bu\|_Y^p 
 \label{genfunc}
 \ee
 for some fixed $p\geq 1$. Indeed, one intuition is that if a uniformly bounded solution to \iref{diveq} exists,
 the minimizer of $\Fvee_{\lambda}$  should tend to the solution $\bu$ of \iref{diveq}
 with minimal $L^\infty$ norm as $\lambda\to \infty$.
 
 We shall first see that in the case of $Y=L^2$ and $p=2$, it is possible to avoid
 letting $\lambda\to \infty$ through a two-step constructive approach. We then
 discuss more general situations where we can construct a uniformly bounded
 solution $\bu$ by a hierarchical decompositions based on iterated
 minimizations of the above functional.
 
Before going further let us observe that  
the existence of a minimizer for $\Fvee$ can be derived by the 
elementary arguments under a mild assumption on the space $Y$.

\begin{lemma}\label{lem:min-exists} 
Assume that $Y=Z'$ is a dual space of distribution, so that 
$$
\|v\|_Y:=\max\{\langle v,\varphi \rangle_{Y,Z}\; :\; \varphi \in \cD(\Omega), \; \|\varphi\|_Z\leq 1\},
$$
then there exists a minimizer $\bu_1$ of $\Fvee$
\end{lemma}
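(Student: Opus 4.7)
The plan is to apply the direct method of the calculus of variations: take a minimizing sequence, extract a weakly-$*$ convergent subsequence via Banach--Alaoglu, and show that the limit is a minimizer by proving lower semicontinuity of both terms in $\Fvee$. Concretely, set $m:=\inf_{\bu\in L^\infty(\Om)}\Fvee(\bu)$, which is finite since $\Fvee(0)=\lambda\|f\|_Y^p<\infty$. Pick a minimizing sequence $(\bu_n)$ with $\Fvee(\bu_n)\to m$. The $L^\infty$ penalty immediately yields a uniform bound $\sup_n\|\bu_n\|_{L^\infty}<\infty$.

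Next, since $L^\infty(\Om)=(L^1(\Om))'$, Banach--Alaoglu gives a subsequence (still denoted $\bu_n$) and $\bu_1\in L^\infty(\Om)$ with $\bu_n\rightharpoonup^* \bu_1$. Weak-$*$ lower semicontinuity of the $L^\infty$ norm then yields
\[
\|\bu_1\|_{L^\infty}\leq \liminf_{n\to\infty}\|\bu_n\|_{L^\infty}.
\]

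The core step is to establish lower semicontinuity of the residual $\|f-\div \bu\|_Y$. Weak-$*$ convergence in $L^\infty$ implies $\langle \bu_n,\bw\rangle \to \langle \bu_1,\bw\rangle$ for every $\bw\in\cD(\Om)$, so by testing against $\nabla\varphi$ we obtain $\langle \div\bu_n,\varphi\rangle=-\langle \bu_n,\nabla\varphi\rangle\to-\langle \bu_1,\nabla\varphi\rangle=\langle \div\bu_1,\varphi\rangle$ for every $\varphi\in\cD(\Om)$. Hence $f-\div\bu_n\to f-\div\bu_1$ in $\cD'(\Om)$. Using the dual representation hypothesized in the lemma, for each admissible $\varphi\in\cD(\Om)$ with $\|\varphi\|_Z\leq 1$,
\[
\langle f-\div\bu_1,\varphi\rangle=\lim_{n\to\infty}\langle f-\div\bu_n,\varphi\rangle\leq \liminf_{n\to\infty}\|f-\div\bu_n\|_Y.
\]
Taking the supremum over such $\varphi$ gives $\|f-\div\bu_1\|_Y\leq \liminf_n\|f-\div\bu_n\|_Y$, and since $t\mapsto t^p$ is continuous and nondecreasing on $[0,\infty)$, the same inequality persists for the $p$-th power.

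Combining the two lower semicontinuity estimates yields $\Fvee(\bu_1)\leq \liminf_n \Fvee(\bu_n)=m$, so $\bu_1$ achieves the infimum. The only nontrivial point is the lower semicontinuity of the $Y$-norm term, which would be delicate in general but becomes elementary here: the dual representation reduces it to the lower semicontinuity of a single linear pairing against a fixed test function, which is automatic from distributional convergence of $\div\bu_n$.
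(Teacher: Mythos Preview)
Your proof is correct and follows essentially the same direct-method approach as the paper: take a minimizing sequence, extract a weak-$*$ convergent subsequence in $L^\infty$ via Banach--Alaoglu, and invoke weak-$*$ lower semicontinuity of both terms. Your treatment of the $Y$-norm lower semicontinuity---passing through distributional convergence of $\div\bu_n$ and using the explicit dual representation against test functions---is in fact more detailed than the paper's sketch, which simply asserts that $\div\bu^n$ converges weak-$*$ in $Y$ to $\div\bu_1$ and then appeals to weak lower semicontinuity of norms.
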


\begin{proof}
Consider a minimizing sequence $\bu^n$, therefore such that $\|\bu^n\|_{L^\infty}$ and $\|\div \bu^n\|_{Y}$ are
are uniformly bounded. Then, up to a subsequence extraction, we have the following properties :
\begin{itemize}
\item[(i)] Both $\|f-\div \bu^n\|_{L^2}$ and $\|\bu^n\|_{L^\infty}$ have limits $A$ and $B$ such that
$A+\lambda B$ is the infimum of $\Fvee$.
\item[(ii)] $\bu^n$ converges in the $L^\infty$ weak-$*$ sense to some $\bu_1\in L^\infty$.
\item[(iii)] $\div \bu^n$ converges in the $Y$ weak-$*$ sense to the (weak) divergence $\div \bu_1\in L^2$.
\end{itemize}
From this and the properties of weak lower semi-continuity of norms, it readily follows that $\bu_1$ is a minimizer of $\Fvee$.
\end{proof}

Existence is therefore ensured for reflexive spaces $Y$ such as $L^d(\Omega)$ in the $d$-dimensional case,
but also for $Y=L^{d,\infty}$ which we have seen earlier to be an admissible choice for the existence of
uniformly bounded property. We stress that uniqueness of minimizers, is in general not ensured, however
in the case where $Y$ is strictly convex, such as $L^d$, we find that $\div(\bu_1)$ is unique.

Note that the minimization of $\Fvee$ may be computationally intensive,
depending on the form of the $Y$ norm. In 
the particular case of $Y=L^d$,
it can be computed by solving relatively simple 
Euler-Lagrange equations, see \cite{TT2011}.

%%%%%%%%%%%%%%%%%%%%%%%%%%%%%%%%%
\subsection{A two-step approach for $L^{2}$-data}\label{sec:2-step}
%%%%%%%%%%%%%%%%%%%%%%%%%%%%%%%%%

Consider the case of a domain $\Omega\subset \RR^2$ and $Y=L^2(\Omega)$. 
With the choise $p=2$, the functional of interest is therefore
\be
\Fvee(\bu)=\|\bu\|_{L^\infty}+\lambda \|f-\div \bu\|_{L^2}^2
\label{L2func}
\ee
An interesting property of the minimizers is given by the following.

\begin{lemma}
\label{lem:res}
Fix $\lambda>0$ and let $r_\lambda=f-\div\bu_\lambda$ be the residual of the equation \eqref{diveq} for a
minimizer $\bu_\lambda$ of \iref{L2func}. Then $r_\lambda$ belongs to $BV(\Omega)$
with
\begin{equation}
\label{bv}  
|r_\lam|_{TV} \leq \frac{1}{2\lam}.
\end{equation} 
\end{lemma}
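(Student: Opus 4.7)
The plan is to exploit first-order optimality of $\bu_\lambda$ against smooth perturbations $\bu_\lambda + \epsilon \bw$, where $\bw \in \cD(\Omega)$ is any test vector field with $\|\bw\|_{L^\infty} \leq 1$. The key point is that although the $L^\infty$ term in $\JT$ is non-smooth, the triangle inequality supplies a crude one-sided bound that is tight enough to extract the correct first-order information about the residual.

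Specifically, for such a $\bw$ and any $\epsilon > 0$, the triangle inequality gives
\begin{equation*}
\|\bu_\lambda + \epsilon \bw\|_{L^\infty} \leq \|\bu_\lambda\|_{L^\infty} + \epsilon,
\end{equation*}
while a direct expansion of the $L^2$ square yields
\begin{equation*}
\|f - \div(\bu_\lambda + \epsilon \bw)\|_{L^2}^2 = \|r_\lambda\|_{L^2}^2 - 2\epsilon \langle r_\lambda,\div \bw\rangle_{L^2} + \epsilon^2 \|\div \bw\|_{L^2}^2.
\end{equation*}
Combining these with the minimality inequality $\JT(\bu_\lambda) \leq \JT(\bu_\lambda + \epsilon \bw)$ and cancelling common terms, one obtains
\begin{equation*}
0 \leq \epsilon - 2\lambda\epsilon \langle r_\lambda,\div \bw\rangle_{L^2} + \lambda\epsilon^2 \|\div \bw\|_{L^2}^2 .
\end{equation*}

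Dividing by $\epsilon$ and letting $\epsilon \to 0^+$ gives $\langle r_\lambda,\div\bw\rangle_{L^2} \leq \tfrac{1}{2\lambda}$. Since $-\bw$ is equally admissible, the same inequality holds with the opposite sign, hence $|\langle r_\lambda,\div\bw\rangle_{L^2}| \leq \tfrac{1}{2\lambda}$. Taking the supremum over all $\bw \in \cD(\Omega)$ with $\|\bw\|_{L^\infty} \leq 1$ and recalling the duality definition of total variation recorded in Section \ref{sec:theory}, namely $|r_\lambda|_{TV} = \sup_{\bw} \int_\Omega r_\lambda \, \div \bw$, yields the claimed bound.

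I do not anticipate a real obstacle here: the only subtle point is that the $L^\infty$ norm is not differentiable at $\bu_\lambda$, but the linear triangle bound $\|\bu_\lambda + \epsilon\bw\|_{L^\infty} \leq \|\bu_\lambda\|_{L^\infty} + \epsilon\|\bw\|_{L^\infty}$ is precisely the one-sided estimate needed so that the $O(\epsilon)$ contribution from the $L^\infty$ term matches the TV-test structure of the $L^2$ cross-term. The existence of $\bu_\lambda$ itself is guaranteed by Lemma \ref{lem:min-exists}, so the argument is self-contained.
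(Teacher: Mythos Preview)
Your argument is correct and matches the paper's own proof essentially line for line: both perturb the minimizer by $\epsilon\bw$ with $\bw\in\cD(\Omega)$, combine the triangle inequality for the $L^\infty$ term with the exact expansion of the $L^2$ square, and let $\epsilon\to 0^+$ to extract $\int_\Omega r_\lambda\,\div\bw \leq \tfrac{1}{2\lambda}\|\bw\|_{L^\infty}$. The only cosmetic difference is that you normalize $\|\bw\|_{L^\infty}\leq 1$ at the outset and then argue the two-sided bound via $\pm\bw$, whereas the paper keeps a general $\bw$ and reads off the TV bound directly from the one-sided inequality and the definition of $|\cdot|_{TV}$ as a supremum.
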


\begin{proof}
For any $\z\in \cD(\Omega)$ and $\epsilon >0$, we have
\begin{eqnarray*}%\label{eq:tmpa}
\Fvee(\bu_\lambda)= \Bnorm{\u_\lambda} +\lambda\|f-\div\u_\lam\|^2_{L^2} & \leq &  \Bnorm{\u_\lambda+\epsilon\z} +\lambda\|f-\div(\u_\lam+\epsilon \z)\|^2_{L^2}   \\
&  \leq   & \Bnorm{\u_\lam}+\epsilon  \Bnorm{\z} + \lambda 
\displaystyle \|r_\lambda\|^\rp_{L^2} -2 \lam\epsilon \int_\Omega r_\lambda \div \z+o(\epsilon)
. \\
& = & \Fvee(\bu_\lambda) +\epsilon  \Bnorm{\z}  -2 \lam\epsilon \int_\Omega r_\lambda \div \z +o(\epsilon)
\end{eqnarray*}
and by letting $\epsilon \downarrow 0$ we find that
\be
\int_\Omega r_\lambda \div \z \leq \frac 1{2\lam} \Bnorm{\z},
\ee
for all $\z\in \cD(\Omega)$, which shows that $r_\lambda\in BV(\Omega)$ with bound \iref{bv} for its total variation.
\end{proof}

Note that we also have the trivial bounds 
\be
\label{triv}
\|r_\lambda\|_{L^2}\leq \|f\|_{L^2} \quad{\rm and} \quad \|\bu\|_{L^\infty} \leq \lambda\|f\|_{L^2}^2,
\ee
by comparing $\Fvee(\bu_\lambda)$ with $\Fvee(0)$. However Lemma \ref{lem:res}
shows a ``regularization effect'' $f\in L^2 \mapsto \r_\lambda\in BV$.
As noted in Remark \ref{remlor}, the space $BV$ 
has a continuous embedding in the Lorentz space $L^{2,1}$ which is strictly smaller than $L^2$. 

This effect leads us to a direct construction of a bounded solution. Without loss of generality, we again 
work on $\Omega=\RR^2$, and denote by $\bu_1$ the minimizer 
and $r_1=f-\div \bu_1$ the residual, when taking the particular value $\lambda:=\|f\|_{L^2}^{-1}$.
Using both \iref{bv} and \iref{eq:GNS}, we have on the one hand
\be
\|r_1\|_{L^{2,1}}\leq \beta_2 |r_1|_{TV}
\leq \frac {\beta_2}{2\lambda} =\frac {\beta_2}{2}\|f\|_{L^2};
\ee
and on the other hand
\be
\|\u_1\|_{L^\infty} \leq \|f\|_{L^2},
\ee
in view of the second bound in \iref{triv}. We then write $r_1 =\div \u_2$, where 
\be
\u_2:= \psi* \r_1= \frac{1}{2\pi}\frac{x}{|x|^2}*r_1,
\ee
is the Helmoltz solution for the data $\r_1$. Since $\psi\in L^{2,\infty}$ and $\r_1\in L^{2,1}$, it is readily seen
that $\u_2$ is uniformly bounded by
\be
\|\bu_2\|_{L^\infty}\leq \|\psi\|_{L^{2,\infty}}\|r_1\|_{L^{2,1}} \leq  C\|f\|_{L^2}, \quad C:=\frac{\beta_2}{2}\|\psi\|_{L^{2,\infty}}.
\ee
Thus we end up with
\be
\u_{\text{2step}}:=\u_1+\u_2,
\ee 
as a two-step construction of a uniformly bounded solution to \iref{diveq}
which satisfies \iref{eq:divU=F} with $\gamma=1+C$.

\begin{remark}
A similar regularization effect takes place in the $d>2$ case for data $f\in L^d$
\[
f\in L^d(\Omega) \mapsto r_1 \in L^{d,d-1}(\Omega).
\]
However, since $\psi\in L^{d,\infty}$, this is not enough to derive a similar two-step construction 
by applying the Helmoltz solver to the residual. Instead, this will be addressed by the multi-step construction in the next section below. 
\end{remark}

Figure \ref{fig:2step}, quoted from \cite[Section 6]{TT2011},  shows the two-step
solution of the example due to L. Nirenberg, \cite[Remark 7]{BB2003}, which demonstrates the unboundedness of $\|\u_{\text{Hel}}\|_{L^\infty}$ solved for $\u\in L^2_{\#}([-1,1]^2)$ with periodic boundary conditions, given by
\begin{equation}\label{eq:counter}
f=\Delta v \qquad   v(x_1,x_2) := x_1|\log |x||^{1/3}\zeta(|x|), \qquad 
 \zeta(r)=\chi_{(-1,1)}e^{-\frac{1}{1-r^2}}.
\end{equation}
 In this case, 
 Helmholtz solution, $\u_{\text{Hel}}=\nabla V$, has a fractional logarithmic growth at the origin, which should be contrasted with the hounded two-step constructed solution shown in figure \ref{fig:2step}.
 Table \ref{tab:mesh2} reports that the ratio between $N\times N$ grid discretization of 
$\|\u^{N}_{\text{2step}}\|_{L^\infty}$ and $\|f^N\|_{L^2}$ remains bounded
when $N$ is large, in contrast to the computed solution of Helmholtz.

\begin{figure}[htb]
\centering
\includegraphics[scale=.57]{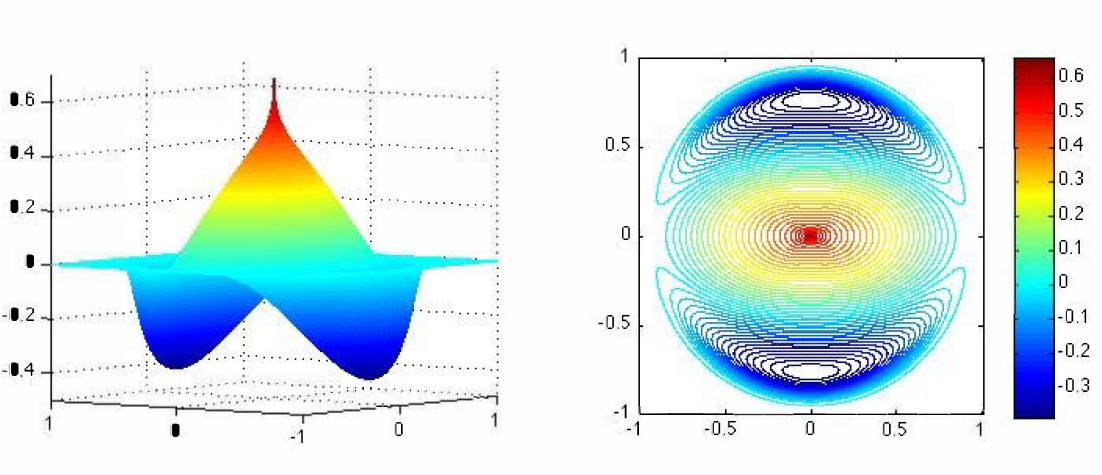}
\includegraphics[scale=.57]{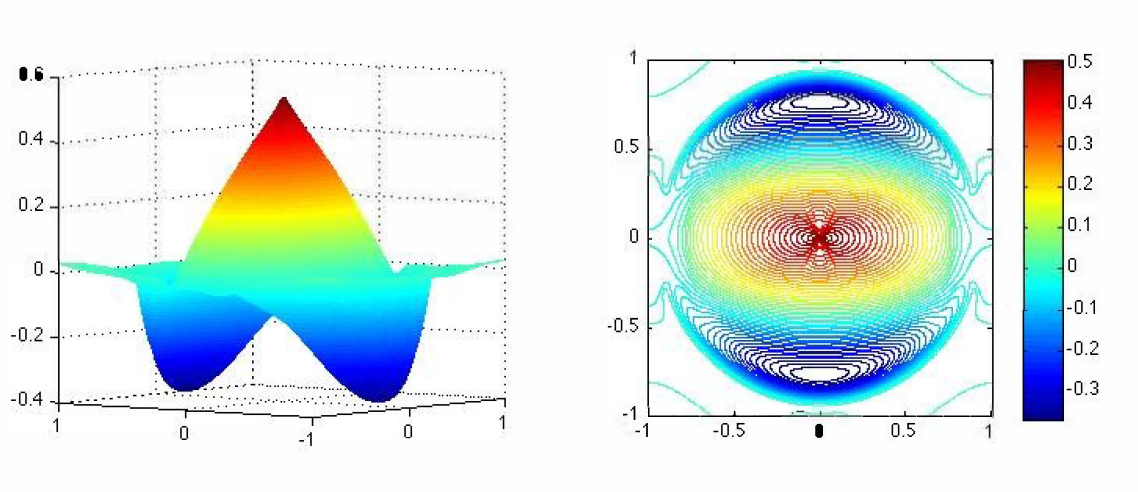}
\caption{Solution of Bourgain-Brezis problem with 2D data in \eqref{eq:counter}.\newline \hspace*{2.1cm}Helmholtz solution, 
$\u_{\text{Hel}}$ (top), vs. two-step solution, $\u_{\text{2step}}$ (bottom).}
\label{fig:2step}       % Give a unique label
\end{figure}

\begin{table}[ht]
\renewcommand{\arraystretch}{1.4}
\begin{tabular}{cccccc}
\hline\hline
The  $N\times N$ grid &$50\times 50$&$100\times 100$&$200\times 200$&
$400\times 400$&$800\times 800$\\[0.3cm]
$\displaystyle \frac{\|\u^{1,N}_{\text{Hel}}\|_{L^\infty}}{\|f^N\|_{L^2}}$&0.2295&0.2422&0.2540&0.2650&0.2752\\[0.5cm]
%\hline\vspace*{0.1cm}
$\displaystyle \frac{\|\u^{1,N}_{\text{2step}}\|_{L^\infty}}{\|f^N\|_{L^2}}$&0.2096&0.2128&0.2144&0.2151&0.2154\\[0.4cm]
\hline\hline
\end{tabular}

\medskip
\caption{$\Leb^{\infty}$ norm of numerical solutions for different
 grids: Helmholtz vs. the two-step solution of \eqref{eq:counter} for different
 grids.}\label{tab:mesh2}
\end{table}

%%%%%%%%%%%%%%%%%%%%%%%%%%%%%%%%%
\subsection{Hierarchical constructions for data in Fr\'echet differentiable spaces.}
%%%%%%%%%%%%%%%%%%%%%%%%%%%%%%%%%%

We now work with general data $f\in Y(\RR^d)$.
In this section, we use a hierarchical approach
to construct uniformly bounded solution, under the 
assumption that the $Y$ norm is Frechet differentiable. 

\begin{assumption}[{\bf \Frechet differentiablity}] The $Y$-norm  is   \emph{\Frechet differentiable}, namely --- there exists $\phi: Y\to Y'$ such that
\begin{equation}\label{eq:Fre}
\|v+\epsilon w\|_{Y} =\|v\|_{Y} + 
\epsilon\langle \phi({v}),w\rangle +o(\epsilon) \ \ \textnormal{for all} \ \
v,w \in Y, \ \ v \neq 0. 
\end{equation}
\end{assumption}

As an immediate consequence of this assumption, for any $p>1$, the application
$v\mapsto \|v\|_Y^p$ is also Fr\'echet differentiable and its derivative
is given by
$$
\phi_p(v):=p\|v\|^{p-1}_Y\phi({v}).
$$
As we discuss further, spaces admitting Fr\'echet differentiable norms 
are for example $Y=L^d$ as well as $Y=L^{d,q}$ 
when $1<q<\infty$, but not $Y=L^{d,\infty}$.

Let us now consider for a $p>1$ the general functional $\Fvee_\lambda$
of \iref{genfunc}, assuming as before that $Y$ is a dual space. In the sequel we 
use the following two results which makes use of $\phi_p$.  
The first is the generalization of Lemma \ref{lem:res}
that shows a regularization effect, now
on $\phi_p(r_\lambda)$ where
$r_\lambda=f-\div\bu_\lambda$ is the residual.

\begin{lemma}
If  $\u_\lam\in \B$  is a minimizer of  \iref{genfunc}
with residual $\r_\lam=f-\div\u_\lam \in Y$ then
\begin{equation}
\label{condres}  
|\phi_p(\r_\lam)|_{TV} \leq \frac{1}{\lam}.
\end{equation} 
\label{lem:resgen}
\end{lemma}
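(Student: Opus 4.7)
The plan is to mimic the variational argument of Lemma \ref{lem:res} exactly, replacing the quadratic expansion of $\|r_\lambda - \epsilon \div\z\|_{L^2}^2$ by the \Frechet{}-differentiable expansion of $\|r_\lambda - \epsilon\div\z\|_Y^p$. Concretely, I would fix a vector-valued test function $\z \in \cD(\Omega)$, take $\epsilon > 0$, and compare $\Fvee_\lambda(\u_\lambda)$ to $\Fvee_\lambda(\u_\lambda + \epsilon \z)$, using that $\u_\lambda$ is a minimizer.

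The left-hand side of the comparison is handled by two elementary facts. For the $L^\infty$ term one uses the triangle inequality
$$\|\u_\lambda + \epsilon\z\|_{L^\infty} \le \|\u_\lambda\|_{L^\infty} + \epsilon \|\z\|_{L^\infty}.$$
For the data-fidelity term one invokes the \Frechet{} differentiability of $\|\cdot\|_Y^p$, whose derivative was computed right after the assumption as $\phi_p(v) = p\|v\|_Y^{p-1}\phi(v)$; assuming $r_\lambda \neq 0$ (if $r_\lambda = 0$ the claim is vacuous),
$$\|r_\lambda - \epsilon \div\z\|_Y^p = \|r_\lambda\|_Y^p - \epsilon \langle \phi_p(r_\lambda), \div\z\rangle + o(\epsilon).$$
Combining these and using $\Fvee_\lambda(\u_\lambda) \le \Fvee_\lambda(\u_\lambda + \epsilon\z)$, then dividing by $\epsilon$ and letting $\epsilon \downarrow 0$, one obtains
$$\lambda \langle \phi_p(r_\lambda), \div\z \rangle \le \|\z\|_{L^\infty}.$$
Replacing $\z$ by $-\z$ gives the matching lower bound, hence $\lambda |\langle \phi_p(r_\lambda), \div\z\rangle| \le \|\z\|_{L^\infty}$.

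Finally, taking the supremum over all $\z \in \cD(\Omega)$ with $\|\z\|_{L^\infty} \le 1$ and recalling the definition of total variation from Section \ref{sec:theory},
$$|\phi_p(r_\lambda)|_{TV} = \sup_{\z} \int_\Omega \phi_p(r_\lambda) \div\z \le \frac{1}{\lambda},$$
which is the desired bound \eqref{condres}.

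The only real subtlety I anticipate is a bookkeeping one: the \Frechet{} hypothesis is stated for $v\neq 0$, so the case $r_\lambda \equiv 0$ must be dispatched separately (but then $\phi_p(r_\lambda)$ is zero by definition and the inequality is trivial). One should also check that the pairing $\langle \phi_p(r_\lambda), \div\z\rangle$ makes sense, which it does because $\phi_p(r_\lambda) \in Y'$ and $\div\z \in \cD(\Omega) \subset Y$ by the duality assumption $Y = Z'$ used in Lemma \ref{lem:min-exists}. Everything else is a direct transcription of the $L^2$ argument already carried out above.
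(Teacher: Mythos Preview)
Your proposal is correct and follows essentially the same approach as the paper: perturb the minimizer by $\epsilon\z$ with $\z\in\cD(\Omega)$, use the triangle inequality on the $L^\infty$ term and the \Frechet{} expansion on the $Y^p$ term, divide by $\epsilon$, and let $\epsilon\downarrow 0$ to obtain the $TV$ bound. Your extra remarks on the case $r_\lambda=0$ and on the well-posedness of the pairing are welcome sanity checks not made explicit in the paper.
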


\begin{proof}
For any test function $\z\in \cD(\Om)$, we have
\begin{eqnarray*}%\label{eq:tmpa}
 \ \ \JT_\lam(\u_\lam)= \Bnorm{\u_\lam} +\lambda\|f-\div\u_\lam\|^p_{Y}  & \leq & 
  \Bnorm{\u_\lam+\epsilon \z}  + \lambda \|f-\div(\u_\lam+\epsilon \z)\|^p_{Y} \\
&  \leq   & \Bnorm{\u_\lam}+|\epsilon|\cdot\Bnorm{\z} + \lambda 
\displaystyle \|r_\lam\|^p_{Y} -\lam\epsilon \big\langle \phi_p(r_\lam), \div \z \big\rangle +o(\epsilon)
. \\
& = & \JT_\lam(\u_\lam) + |\epsilon|\cdot\Bnorm{\z}-\lam\epsilon \big\langle \phi_p(r_\lam), \div \z \big\rangle+o(\epsilon),
\end{eqnarray*}
and by letting $\epsilon \downarrow 0$ we find
$\displaystyle 
|\phi_p(r_\lam)|_{TV}= \sup_{0 \neq \z \in \cD(\Omega)} 
\frac {\int_{\Om} \phi_p(r_\lam)\,\div \z\, }{\|\z\|_{L^\infty}}  \leq \frac{1}{\lam}$.
\end{proof}

\begin{remark}\label{rem:on-bu-lambda} Note that when $\lam  <\frac {1}{|\phi_p(f)|_{TV}}$,  we then have a trivial minimizer $\u_\lam =0$ and $r_\lam=f$. Lemma \ref{lem:resgen} is relevant
when $\lam$ is large enough
\begin{equation}\label{eq:admiss}
\lam > \frac{1} {|\phi_p(f)|_{TV}}.
\end{equation}
Then $\u_\lam \neq 0$, and \iref{condres} asserts  the $BV$ regularity 
of $\phi_p(r_\lam)$. In fact, for large enough $\lam$, one has
the equality $|\phi_p(r_\lam)|_{TV} = {1}/{\lam}$,  and the minimizer is characterized as an exteremal  pair in the sense that
$$
\int div \bu_\lambda r_\lambda =  |\bu_\lambda|_\infty |r_\lambda|_{TV},
$$
see \cite[Theorem 3]{Mey2002},\cite[Lemma A.3]{Tad2014}.
\end{remark}

\medskip
We also need a second priori estimate which will be useful as a \emph{closure bound} for the iterative procedure of  hierarchical construction described below.

\begin{lemma}
\label{lem:closure}
Assume that $Y$ has a Fr\'echet differentiable norm and that $BV$ is embedded in $Y'$ in the
sense that
\be
\|v\|_{Y'}\leq \beta |v|_{TV}, \quad v\in BV(\RR^d).
\label{bvYprime}
\ee
Then, the following  a priori bound holds
\begin{equation}\label{eq:closure}
\|v\|_{Y}^{p-1} \leq  \eta |\phi_p(v)|_{TV}, \quad\quad v\in Y,
\end{equation}
with $\eta=\beta/p$.
\end{lemma}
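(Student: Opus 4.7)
The plan is to reduce the desired $BV$ bound on $\phi_p(v)$ to an elementary identification of the dual norm of $\phi(v)$, and then invoke the assumed embedding $BV \hookrightarrow Y'$. Concretely, I first want to extract the standard fact that if the $Y$-norm is Fr\'echet differentiable at $v \neq 0$ with derivative $\phi(v) \in Y'$, then $\|\phi(v)\|_{Y'} = 1$. To see this, I use \iref{eq:Fre}: testing with $w = v$ and expanding $\|v + \epsilon v\|_Y = (1+\epsilon)\|v\|_Y$ gives $\langle \phi(v), v\rangle = \|v\|_Y$, and for any other $w$ with $\|w\|_Y \leq 1$, the expansion forces $|\langle \phi(v), w\rangle| \leq 1$. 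Together these give $\|\phi(v)\|_{Y'} = 1$.

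Next, by the definition $\phi_p(v) = p\|v\|_Y^{p-1}\phi(v)$, this immediately yields
\begin{equation*}
\|\phi_p(v)\|_{Y'} = p\,\|v\|_Y^{p-1}.
\end{equation*}
If $\phi_p(v) \notin BV$, then $|\phi_p(v)|_{TV} = +\infty$ and \iref{eq:closure} is trivial; otherwise I apply the assumed embedding \iref{bvYprime} to $w = \phi_p(v)$ to get
\begin{equation*}
p\,\|v\|_Y^{p-1} = \|\phi_p(v)\|_{Y'} \leq \beta\, |\phi_p(v)|_{TV}.
\end{equation*}
Dividing by $p$ yields the claim with $\eta = \beta/p$.

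The case $v = 0$ is immediate since both sides vanish (interpreting $\phi_p(0) = 0$ by the $\|v\|_Y^{p-1}$ factor). I do not foresee a real obstacle: the identification $\|\phi(v)\|_{Y'} = 1$ is a textbook consequence of differentiability of the norm, and the embedding hypothesis does the rest. The only subtlety worth flagging is that the statement is vacuous unless $\phi_p(v)$ is representable by a $BV$ element; but this is precisely the content we wish to detect, so no harm is done by letting the inequality be trivially satisfied otherwise.
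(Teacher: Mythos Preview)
Your proof is correct and follows essentially the same route as the paper: both test \iref{eq:Fre} with $w=v$ to obtain $\langle \phi_p(v),v\rangle = p\|v\|_Y^{p}$, deduce a lower bound $p\|v\|_Y^{p-1}\le \|\phi_p(v)\|_{Y'}$, and then apply the embedding \iref{bvYprime}. You additionally establish the equality $\|\phi(v)\|_{Y'}=1$ via the triangle inequality, which is a slight strengthening but not needed for the lemma.
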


\begin{proof} Fixing $v\in Y$ and comparing the first order terms in 
\[
(1+\epsilon)^p\|v\|_{Y}^p    =\|v+\epsilon v\|_{Y}^p = \|v\|_Y^p + \epsilon\langle  \phi_p(v),v\rangle +o(\epsilon),
\]
implies $p\|v\|_{Y}^p = \langle  \phi_p(v),v\rangle$. Therefore
$$
p\|v\|_{Y}^p\leq \| \phi_p(v)\|_{Y'}\|v\|_{Y}\leq \beta |\phi_p(v)|_{TV}\|v\|_{Y},
$$
 which yields \iref{eq:closure}.
\end{proof}

Following \cite{Tad2014}, we fix the value $p=2$
and for a given sequence $(\lambda_j)_{j\geq 1}$, we define iteratively 
$\u_j$ as the minimizer of 
\be\label{eq:quadratic}
\Fvee_j(\bu)=\|\bu\|_{L^\infty}+\lambda_j \|r_{j-1}-\div \bu\|_Y^2,
\ee
and $r_j=r_{j-1}-\div \bu_j$. The following result shows that,
with a proper choice of $\lambda_j$,
the sum of the $\u_j$ admits a limit which is our desired uniformly bounded
solution to \iref{diveq}.

\noindent
%%%%%%%%%%%%%%%%%%%%%%%%%
\begin{theorem}\label{thm:var}
Consider a \Frechet differentiable space  $Y$ such
that \iref{bvYprime} holds and set $\lambda_j=\lambda_1 2^{j-1}$,
where $\lambda_1:=\frac{2\eta}{\|f\|_{Y}}$.
Then, for any given $f\in Y$, the 
sum of the $\u_j$ converges in $L^\infty$ to a limit
$\bu=\sum_{j=1}^\infty \u_j$ which is solution to \iref{diveq}
and satisfies
\begin{equation}\label{eq:UinLinf}
\|\bu\|_{\Leb^\infty} \leq 2\beta \|f\|_{Y}.
\end{equation} 
\end{theorem}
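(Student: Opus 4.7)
The plan is a classical "geometric schedule" iteration: choosing $\lambda_j = \lambda_1 2^{j-1}$ forces the residuals $r_j$ to contract geometrically in $Y$, while each correction $\u_j$ is controlled by comparing the functional value at $\u_j$ with the trivial competitor $\bv=0$. The two a priori tools already established — the regularization bound of Lemma \ref{lem:resgen} and the closure bound of Lemma \ref{lem:closure} — combine to yield a geometric decay of $\|r_j\|_Y$, which in turn tames the $L^\infty$ norm of each $\u_j$ and makes the series $\sum \u_j$ absolutely convergent in $L^\infty$.

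The core inductive claim is $\|r_j\|_Y \le \|f\|_Y/2^j$. Assuming it for $j-1$ (with $r_0:=f$), one applies Lemma \ref{lem:resgen} to the minimizer of $\JT_j$ with data $r_{j-1}$ (which is legitimate since the proof of that lemma does not use anything particular to $f$) to obtain
\[
|\phi_2(r_j)|_{TV} \le \frac{1}{\lambda_j},
\]
and then Lemma \ref{lem:closure} with $p=2$ and $\eta=\beta/2$ gives
\[
\|r_j\|_Y \le \eta\,|\phi_2(r_j)|_{TV} \le \frac{\eta}{\lambda_j} = \frac{\eta}{\lambda_1 2^{j-1}} = \frac{\|f\|_Y}{2^j},
\]
where the last equality uses $\lambda_1 = 2\eta/\|f\|_Y$. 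This closes the induction.

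Next, comparing $\JT_j(\u_j) \le \JT_j(0) = \lambda_j \|r_{j-1}\|_Y^2$ yields
\[
\|\u_j\|_{L^\infty} \le \lambda_j \|r_{j-1}\|_Y^2 \le \lambda_1 2^{j-1}\cdot \frac{\|f\|_Y^2}{4^{j-1}} = \frac{2\eta\|f\|_Y}{2^{j-1}}.
\]
Summing the geometric series,
\[
\sum_{j=1}^\infty \|\u_j\|_{L^\infty} \le 2\eta\|f\|_Y \sum_{j=1}^\infty 2^{1-j} \cdot \tfrac{1}{2} = 4\eta\|f\|_Y = 2\beta\|f\|_Y,
\]
so the partial sums $\bu^{(N)}:=\sum_{j=1}^N \u_j$ form a Cauchy sequence in $L^\infty$ and converge to a limit $\bu$ obeying the desired bound \eqref{eq:UinLinf}.

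It remains to identify $\div \bu = f$. By construction $r_N = f - \div \bu^{(N)}$, and we showed $\|r_N\|_Y \to 0$, so $\div \bu^{(N)} \to f$ in $Y$, in particular in the distributional sense. On the other hand $\bu^{(N)} \to \bu$ in $L^\infty$, hence $\div \bu^{(N)} \to \div \bu$ in $\cD'(\RR^d)$. Uniqueness of distributional limits gives $\div \bu = f$. The main subtlety I anticipate is just the bookkeeping of the constants — making sure the factor $\eta = \beta/2$ propagates correctly so that the geometric sum produces exactly $2\beta\|f\|_Y$ rather than some suboptimal multiple — and verifying that the iterative application of Lemma \ref{lem:resgen} is legitimate, i.e.\ that $\u_j$ at each step is a genuine minimizer (which follows from Lemma \ref{lem:min-exists} applied to the shifted problem with data $r_{j-1}$, assuming $Y$ is reflexive, as is the case for the examples $L^d$ and $L^{d,q}$ with $1<q<\infty$).
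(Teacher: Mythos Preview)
Your argument is correct and essentially identical to the paper's: compare with the zero competitor to bound $\|\u_j\|_{L^\infty}\le\lambda_j\|r_{j-1}\|_Y^2$, combine Lemmas \ref{lem:resgen} and \ref{lem:closure} to get $\|r_{j-1}\|_Y\le\eta/\lambda_{j-1}$, and sum the resulting geometric series to reach $4\eta\|f\|_Y=2\beta\|f\|_Y$. Two cosmetic remarks: your displayed sum has a stray ``$\cdot\tfrac{1}{2}$'' (the series $\sum_{j\ge 1}2^{1-j}=2$ already yields $4\eta\|f\|_Y$ without it), and your ``induction'' never actually uses the inductive hypothesis --- the bound on $\|r_j\|_Y$ follows directly from the two lemmas applied at step $j$, just as the paper observes.
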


\begin{proof}
Comparing $\u_j$ with the $0$ solution, we find that
$$
\|\bu_j\|_{L^\infty} \leq \Fvee_j(0)=\lambda_j \|r_{j-1}\|^2_Y,
$$
and in particular
$$
\|\bu_1\|_{L^\infty} \leq \lambda_1 \|f\|^2_Y.
$$
On the other hand, combining the closure bound \iref{eq:closure}
and the regulatization bound \iref{condres}, we find that
\begin{equation}\label{eq:rj-decay}
\|r_{j-1}\|_Y\leq \eta |\phi_2(r_{j-1})|_{TV} \leq \frac {\eta}{\lambda_{j-1}}.
\end{equation}
Therefore, for $j\geq 2$
$$
\|\bu_j\|_{L^\infty} \leq \eta^2 \frac{\lambda_j}{\lambda_{j-1}^2}=\frac{8\eta^2}{\lambda_1} 2^{-j}
$$
and $\sum_{j=1}^\infty \u_j$ thus converges to a uniformly bounded limit with
$$
\|\bu\|_{L^\infty} \leq \lambda_1\|f\|^2_Y+\frac{4\eta^2}{\lambda_1}=4\eta \|f\|_Y=2\beta \|f\|_Y, 
$$
where we have used the chosen value of $\lambda_1$.
In addition, a telescoping sum of $r_j=r_{j-1}-\div \bu_j$ yields $f=\div (\sum_{k=1}^j\bu_k) + r_j$ and 
the residual $r_j$ tends to $0$ in $Y$.
This proves 
that $\div(\bu)=f$.
\end{proof}

\begin{remark}
Theorem \ref{thm:var} extends the hierarchical construction  of Bourgain-Brezis problem in \cite{Tad2014} for $Y=L^d$-data with $p=d$. In fact, the choice of the parameter $p>1$ need not be tied to $Y$, which led to the simpler choice of 
$p=2$ in \eqref{eq:quadratic}.  
\end{remark}

\begin{remark}
The closure \eqref{eq:closure} implies that our choice of $\lam_1$ is admissible in the sense that \eqref{eq:admiss} holds, 
\[
\lam_1=\frac{2\eta}{\|f\|_{Y}} \geq \frac{2\eta}{\eta|\phi_2(f)|_{TV}}
>\frac{1}{|\phi_2(f)|_{TV}}.
\]
In other words ---  already the first variational iteration produces a non-trivial minimizer, $\bu_1\neq 0$. In fact, one can underestimate $\lam_1 < \beta/\|f\|_{Y}$ in case $\beta$ in \eqref{bvYprime} is not accessible, and yet the variational iterations become effective after the first $\log(\beta)$  iterations with zero minimizers.
\end{remark}

\begin{example}
Inspired by \cite{Mey2002}, we demonstrate the hierarchical constriction of theorem \ref{thm:var} in the two-dimensional example  of  $f=\alpha \chi_R$, where $\alpha$ is a fixed constant and $\chi_R$ is the characteristic function of the ball of radius $R$. Of course, in this case of $BV$ function we can simply solve $\displaystyle \bu=\nabla\Delta^{-1}f= \alpha \frac{x}{2}\chi_R$. 
The minimization with $\lambda> 1/(4\pi \alpha)$ yields 
\[
f=div \bu_\lambda+r_\lambda, \quad \bu_\lambda= (\alpha-\beta)\nabla\Delta^{-1}\chi_R=(\alpha-\beta)\frac{x}{2}\chi_R \ \ \textnormal{and} \ \  r_\lambda=\beta\chi_R \ \ \textnormal{with} \ \ \beta:=\frac{1}{4\pi \lambda}.
\]
This is verified by checking that $|r_\lambda|_{TV}= \beta 2\pi R=\frac{1}{2\lambda}$,  and the extremal property (see remark \ref{rem:on-bu-lambda}),
\[
\int div \bu_\lambda r_\lambda = (\alpha-\beta)\frac{R}{4\lambda}= |\bu_\lambda|_\infty |r_\lambda|_{TV}, \quad |\bu_\lambda|_\infty=(\alpha-\beta)\frac{R}{2}.
\]
Iterating we find (for $\alpha>8\pi$)
\[
f= \sum_{j=1}^\infty \bu_j, \qquad \bu_j=\left\{\begin{array}{cc}
\displaystyle \Big(\alpha-\frac{1}{8\pi}\Big)\frac{x}{2}\chi_R & j=1,\\ \\
\displaystyle \frac{1}{4\pi 2^j}\frac{x}{2}\chi_R, & j\geq 2.
\end{array}\right.
\]
\end{example}

The above theorem can be used to construct uniformly bounded solutions
to \iref{diveq} for data $f$ in Lorentz spaces $Y=L^{d,q}(\RR^d)$ when
$1<q<\infty$. Indeed, since $L^{d,q}$ is reflexive, they qualify as Asplund spaces with \Frechet differentiable norm, see \cite{Asp1968} or \cite[thm 2.12]{Phe1993}. 
Except for the case $q=d$ corresponding to the space $Y=L^d$, the Fr\'echet 
derivative of the $L^{d,q}$ does not have a simple explicit form, however we note
that this is not required for defining the hierarchical solution.
Other applications of hierarchical constructions in inverse problems that arise in image processing can be found in \cite{MNR2019}.

In contrast, the space $Y=L^{d,\infty}(\RR^d)$ does not have a 
Fr\'echet differentiable norm, as seen by the following counter-example due Luc Tatar, \cite{Tar2011}.
The purpose is to show that there exists $f, g$ and $\alpha >0$ such that 
\begin{equation}\label{eq:notF}
\wkLp{f+\epsilon g} \geq \wkLp{f} + \alpha |\epsilon|
\end{equation}
proving  that $\wkLp{\cdot}$ is not \Frechet --- not even Gateaux differentiable. To this end one restrict attention to the unit interval $(0,1)$. Set $f(x):=x^{-\frac{1}{p}}$ 
and 
\[
g(x):=\sum_{k=0}^\infty (-1)^k2^{\frac{k}{p}}{\mathds 1}_{{\mathbb I}_k}(x), \qquad 
{\mathbb I}_k = (2^{-(k+1)},2^{-k}).
\]
 The second rearrangement of $f$ is given by  $\ds f^{**}(t)=\frac{p}{p-1}t^{-\frac{1}{p}}, \ 0<t<1$. Since $|g(x)|\leq f(x)$ it follows that $F:=f+\epsilon g \geq 0$ for $|\epsilon|<1$, and hence
\begin{equation}\label{eq:whynotF}
\begin{split}
\|f+\epsilon g\|_{\weakLp} & \geq t^{\frac{1}{p}}F^{**}(t)=t^{-1+\frac{1}{p}}\int_0^t F^*(s)\dds  \\ 
 & \geq  t^{-1+\frac{1}{p}}\int_0^t F(s)\dds
   = t^{-1+\frac{1}{p}}\int_0^t f^*(s)\dds +\epsilon t^{-1+\frac{1}{p}}\int_0^t g(s)\dds \\
 & =\|f\|_{\weakLp}+\epsilon t^{-1+\frac{1}{p}}\int_0^t g(s)\dds \ \ \textnormal{for all} \  \ t<1.
\end{split}
\end{equation}
It remains to lower bound the term on the right. We compute
\[
\int_{{\mathbb I}_k} g(s)\dds =(-1)^k2^{\frac{k}{p}}|{\mathbb I}_k|= (-1)^k\rho^k, \qquad \rho:=2^{-1+\frac{1}{p}} <1.
\]
It follows that $\ds \int_0^{2^{-k}} g(s)\dds = \frac{(-1)^k\rho^k}{1+\rho}$ implying that
$\ds \epsilon t^{-1+\frac{1}{p}}\int_0^t g(s)\dds_{\, |{\, t=2^{-k}}}= \frac{(-1)^k\epsilon}{1+\rho}$,
and  \eqref{eq:notF} follows from \eqref{eq:whynotF} at $t=2^{-k}$ with $(-1)^k\epsilon>0$ and $\ds \text{\large$\alpha=\nicefrac{1}{(1+\rho)}$}$.

%%%%%%%%%%%%%%%%%%%%%%%%%%%%%%%%%
\subsection{A hierarchical construction for general data.} %%%%%%%%%%%%%%%%%%%%%%%%%%%%%%%%%%

Finally, we work with general data $f\in Y(\Omega)$ for some $\Omega\in\RR^d$,
without making assumption on the Fr\'echet differentiability of the $Y$ norm, 
but instead taking as a prior assumption that $Y$ is a space such that uniformly bounded
solution to \iref{diveq} exist with the bound  \iref{eq:divU=F} for some $\gamma>0$.

We also assuming that $Y$ is a dual space so that there exists a minimizer to the functional $F$ having the general form \iref{genfunc}. Here we use the exponent $p=1$ so that
\be
 \Fvee(\bu)=\Fvee_{\lambda}(\bu)=\|\bu\|_{L^\infty}+\lambda \|\div \bu-f\|_Y 
 \label{genfunc1}
 \ee
For a value of $\lambda$ to be fixed later, 
we denote by $\bu_1$ is the minimizer and $r_1=f-\div \u_1$ the residual.
In addition to the trivial bound
\be
\label{triv1}
\|r_1\|_{Y}\leq \|f\|_{Y} \quad{\rm and} \quad \|\bu_1\|_{L^\infty} \leq \lambda\|f\|_{Y},
\ee
that are obtained by comparison of $\Fvee(\u_1)$ and $\Fvee(0)$, we can also compare
$\Fvee(\u_1)$ with $\Fvee(\u)$ where $\u$ is a solution to \iref{diveq} 
satisfying the bound  \iref{eq:divU=F}. It follows that
\be
\lambda \|r_1\|_{Y} \leq \|\u_j\|_{L^\infty} \leq \gamma \|f\|_Y.
\ee
Therefore, taking $\lambda > \gamma$, we obtain a contraction property
\be
\|r_1\|_{Y} \leq \rho \|f\|_Y,
\ee
with $\rho=\gamma/\lambda<1$. 

This suggests a hierarchical construction that was
proposed in a more general context in \cite{Tad2014}:
with this fixed value of $\lambda$, we define iteratively 
$\u_j$ as the minimizer of 
\be
\Fvee_j(\bu)=\|\bu\|_{L^\infty}+\lambda \|r_{j-1}-\div \bu\|_Y,
\ee
and $r_j=r_{j-1}-\div \bu_j$. By recursive application of
the above contraction principle, it follows that
\be
\|r_{j}\|_Y \leq \rho \|r_{j-1}\|_Y \leq \cdots \leq  \rho^j \|f\|_Y.
\ee
as well as
\be
\|\u_j\| \leq \gamma \|r_{j-1}\|_Y \leq \gamma \rho^{j-1}\|f\|_Y.
\ee
From this it follows that the hierarchical construction
\be
\u_1+\u_2+\cdots+\u_j+\cdots,
\ee
converges to a uniformly bounded $\u$ that satisfies the equation \iref{diveq}
and the bound
\be
\|\u\|_{L^\infty} \leq \frac {\gamma}{1-\rho}\|f\|_Y.
\ee


\begin{thebibliography}{xx}

\ifx%%%
\bibitem[AF2003]{AF2003} {\sc R. Adams and J. Fournier}, Sobolev Spaces, 2nd ed., Academic Press, 2003.
\fi%%

\bibitem[Aji2009]{Aji2009}
S. Ajiev, On Chebyshev centres, retractions, metric projections and homogeneous inverses for
Besov, Lizorkin-Triebel, Sobolev and other Banach spaces. 
East J. Approx. 15(3) (2009), 375-428.

\bibitem[Asp1968]{Asp1968}
E. Asplund, Fr\'{e}chet diﬀerentiability of convex functions, Acta Math. 121, (1968) 31-47.

\ifx%%
\bibitem[Ba1991]{Ba1991} {\sc K. Ball} {\sl Volume ratios and a reverse isoperimetric inequality}. J. London Math. Soc (1991)
	
\bibitem[BD1978]{BD1978} {\sc H. Berens and R. De Vore} {\sl Quantitative Korovkin Theorems for Positive Linear Operators on $L_p$- Spaces}, 
Trans. AMS  245, (1978), 349--361.
\fi%%%%%%

\bibitem[BC2011]{BC2011}
H. Bahouri and A. Cohen, Refined Sobolev inequalities in Lorentz spaces, J Fourier Anal Appl (2011) 17:662-673.

\bibitem[BS1988]{BS1988}
{\sc C. Bennett and R. Sharpley}, Interpolation of operators, Pure and Applied Mathematics, vol. 129, Academic Press, Orlando, 1988.

\bibitem[BL1976]{BL1976}
{\sc J. Bergh and J. L\"ofstr\"om}, Interpolation Spaces, An Introduction, 
Springer, 1976.

\bibitem[BB2003]{BB2003}
{\sc J. Bourgain and H. Brezis}, {\sl On the equation ${\rm div}\, Y=f$ and application to control of phases},
  J. Amer. Math. Soc.  16  (2003),  no. 2, 393--426.



\bibitem[BB2007]{BB2007}
{\sc J. Bourgain and H.  Brezis},  {\sl New estimates for elliptic equations and Hodge type systems}  J. Eur. Math. Soc. (JEMS)  9  (2007),  no. 2, 277--315.


\ifx%%%%%%
\bibitem[Cwi75]{Cwikel75}
 {\sc A. Cwikel}, {\sl On the dual of weak $L^p$}, 
Annales de l'institut Fourier 25 (2) (1975), 81--126.

\bibitem[ET1999]{ET1999}
{\sc I. Ekeland and R. T\'{e}mam}, Convex Analysis and Variational
Problems, SIAM, 1999.
\fi%%%%%%%%%%

\bibitem[EG1992]{EG1992}
{\sc L. Evans and R. Gariepy}, Measure Theory and Fine Properties of Functions, CRC Press, 1992.


\bibitem[Fed1969]{Fed1969}
{\sc H. Federer}, Geometric Measure Theory, Springer, 1969.


\bibitem[Mey2002]{Mey2002} {\sc Y. Meyer}, {\sl Oscillating Patterns in Image Processing and
        Nonlinear Evolution Equations}, University Lecture Series Volume 22, 
        AMS 2002.
        
\bibitem[MNR2019]{MNR2019} 
{\sc K. Modin, A. Nachman and  L. Rondic}
A multiscale theory for image registration and nonlinear inverse problems
Advances in Math. 346 (2019) 1009-1066.

\bibitem[PT2008]{PT2008}
N. C. Phuc and M. Torres, 
Characterizations of the existence and removable singularities
of divergence-measure vector fields, Indiana Univ. Math. J. 57 (2008), 1573-1597.

\bibitem[PT2017]{PT2017}
N. C. Phuc and M. Torres, Characterizations of signed measures in the dual of BV and related isometric isomorphisms. Annali della Scuola Normale Superiore di Pisa, Classe di Scienze (5). Vol. XVII (2017) 385-417

\bibitem[ONe1963]{ONe1963}
{\sc R. O'Neil}, {\sl Convolution operators and $L(p,q)$ spaces}, Duke
Math, J. 30, 129-143, 1963.


\bibitem[Phe1993]{Phe1993}
R. Phelps,
Convex Functions, Monotone Operators and Differentiability,
 Lecture Notes in Mathematics, Springer Verlag, 1993.


\bibitem[Roc1974]{Roc1974} {\sc R. T. Rockafeller}, Conjugate Duality and Optimization, SIAM/CBMS Monograph Ser. 16, SIAM Pub. 1974.


\bibitem[TT2011]{TT2011} {\sc E. Tadmor and C. Tan}, {\sl Hierarchical construction of bounded solutions of $div U=F$ in critical regularity spaces},  ``Nonlinear Partial Differential Equations'', Proc. 2010 Abel Symposium (H. Holden \& K. Karlsen eds.), Abel Symposia 7, Springer (2011), pp. 255-269.

\bibitem[Tad2014]{Tad2014}
{\sc E. Tadmor}, {\sl Hierarchical construction of bounded solutions in critical regularity spaces}, Communications in Pure \& Applied Mathematics 69(6) (2016) 1087-1109.

\bibitem[Tar189]{Tar1989}
{\sc L. Tartar}, {\sl Lorentz spaces and applications}, Lecture notes,
Carnegie Mellon University, 1989.

\bibitem[Tar2011]{Tar2011}
{\sc L. Tartar}, personal communication, 2011.

\bibitem[Zie1989]{Zie1989}
{\sc W. Ziemer}, Weakly Differentiable Functions, Graduate Texts in Math., Springer, vol. 120, 1989.
\end{thebibliography}
\end{document}